\documentclass[12pt]{amsart}

\usepackage{mathrsfs}

\usepackage{mathtools}
\usepackage{amsthm,bm}

\usepackage{comment}

\usepackage{amsfonts,amsmath,amssymb,amsxtra,anysize,times,tikz, float, latexsym}


\usetikzlibrary{arrows,decorations.markings,decorations.pathmorphing}
\usepackage{forest}

\usepackage{tabularx, booktabs}
\newcolumntype{Y}{>{\centering\arraybackslash}X}

\usepackage{subfigure}
\usepackage{xcolor}
\usepackage{multicol}
\usepackage{scalefnt}

\definecolor{mygreen}{rgb}{0.0, 0.5, 0.0}

\author{Eugene Gorsky}
\address{University of California at Davis, Davis, California, US}
\address{International Laboratory of Representation Theory and Mathematical Physics, NRU-HSE, Moscow, Russia}
\email{egorskiy@math.ucdavis.edu}
\author{Mikhail Mazin }
\address{Kansas State University, Manhattan, Kansas, US}
\email{mmazin@math.ksu.edu}
\author{ Monica Vazirani} 
\address{University of California at Davis, Davis, California, US}
\email{vazirani@math.ucdavis.edu}
\title{Recursions for rational $q,t$-Catalan numbers}

\keywords{rational Dyck paths, rational Catalan combinatorics,
 simultaneous core partitions, invariant integer subsets, semigroups}

\newtheorem{lemma}{Lemma}[section]
\newtheorem{theorem}[lemma]{Theorem}
\newtheorem{corollary}[lemma]{Corollary}
\newtheorem{proposition}[lemma]{Proposition}
\theoremstyle{definition} 
\newtheorem{example}[lemma]{Example}
\newtheorem{remark}[lemma]{Remark}
\newtheorem{definition}[lemma]{Definition}


\newcommand{\dinv}{\mathtt{dinv}}  
\newcommand{\codinv}{\mathtt{codinv}}  
\newcommand{\Cogen}{\mathtt{Cogen}}  
\newcommand{\Ngen}{\mathtt{Ngen}}  
\renewcommand{\a}{\mathtt{g}} 
\newcommand{\NgenD}{\mathtt{Ngen}(\Delta)}  
\newcommand{\MgenD}{\mathtt{Mgen}(\Delta)}  
\newcommand{\Mgen}{\mathtt{Mgen}}

\newcommand{\area}{\mathtt{area}}



\DeclareMathOperator{\Dyck}{Dyck}



\newcommand{\BZ}{\mathbb{Z}}

\newcommand{\ZZ}{\mathbb{Z}_{\ge 0}}



\newcommand{\Z}{\mathbb{Z}}
\newcommand{\MV}[1]{\marginpar{\tiny{{\color{red} {#1} }}}}
\newcommand{\MVlong}[1]{{\tiny{{\color{red} {#1} }}}}

\newcommand{\bu}{\bm{u}}  
\newcommand{\bv}{\bm{v}}
\newcommand{\bw}{\bm{w}}
\newcommand{\bx}{\bm{x}}
\newcommand{\by}{\bm{y}}
\newcommand{\bz}{\bm{z}}
\newcommand{\Pu}{P_{\bm{u}}}

\newcommand{\IMN}{I_{M,N}} 
\newcommand{\IMNo}{I_{M,N}^0} 
\newcommand{\one}{{1^{M+N}}} 
\newcommand{\zero}{{0^{M+N}}} 
\newcommand{\zeroMN}{{0^{M},0^{N}}} 

\newcommand{\ngapsD}{\ngapsD{\Delta}}

\newcommand{\gapsD}{\gapsD{\Delta}}

\usepackage{todonotes}

\begin{document}
\begin{abstract}
We give a simple recursion labeled by binary sequences which computes rational $q,t$-Catalan power series, both in relatively prime and non relatively prime cases. It is inspired by, but not identical to recursions due to B. Elias, M. Hogancamp, and A. Mellit, obtained in their study of link homology. We also compare our recursion with the Hogancamp-Mellit's recursion and verify a connection between the Khovanov-Rozansky homology of $N,M$-torus links and the rational $q,t$-Catalan power series for general positive $N,M.$
\end{abstract}
\date{\today}
\maketitle


\section{Introduction}

In the last decade the rational $q,t$-Catalan numbers attracted a lot of interest in algebraic combinatorics. Given a pair of integers $(M,N)$, we can consider the set of all partitions which are simultaneously $M$- and $N$-cores, that is, none of their hook lengths are divisible by $M$ or $N$. It is easy to see (e.g. \cite{GM2}) that such $(M,N)$-cores are in bijection with the subsets $\Delta\subset\BZ_{\ge 0}$ such that
$0 \in \Delta$, $\Delta+N\subset \Delta,\Delta+M\subset\Delta$ and $\overline{\Delta}:=\BZ_{\ge 0}\setminus\Delta$ is finite. We will relax the normalization condition $0\in\Delta$ and call such subsets \textit{$(M,N)$--invariant}.

If $M$ and $N$ are coprime, then  Anderson \cite{Anderson} proved that the set of $(M,N)$ cores is finite and, in fact, is in bijection with the set $\Dyck(M,N)$ of Dyck paths in the $M\times N$ rectangle. For such paths one can define two statistics $\area$ and $\dinv$ and define a bivariate polynomial
$$
c_{M,N}(q,t)=\sum_{D\in \Dyck(M,N)}q^{\area(D)}t^{\dinv(D)}.
$$
This polynomial generalizes $q,t$-Catalan numbers of Garsia and Haiman \cite{GaHa}  (which appear at $M=N+1$)
 and has 
lots of remarkable properties, for example, it is symmetric in $q$ and $t$. The latter follows from the so-called rational Shuffle conjecture \cite{GN,BGLX} recently proved by Mellit \cite{Mellit1}.  The statistic $\dinv$ has several equivalent definitions (see Definition \ref{def:codinv} below); the most elegant one is obtained using the sweep map of Armstrong et.~al.~\cite{ALW,AHJ}. Using the above bijections, one can   translate $\dinv$ as a statistics on $(M,N)$-invariant subsets, which was explicitly defined in \cite{GM1}, see Section \ref{sec:recursion} for details. Thus, 
\begin{equation}
\label{def cMN}
c_{M,N}(q,t)=\sum_{\Delta\in \IMNo}q^{\area(\Delta)}t^{\dinv(\Delta)}=(1-q)\sum_{\Delta\in \IMN}q^{\area(\Delta)}t^{\dinv(\Delta)}.
\end{equation}
where $\IMN$ (respectively, $\IMNo$) denotes the set of $(M,N)$ invariant subsets (with
$0 \in \Delta$). 

If $M$ and $N$ are not coprime, then the sets of $(M,N)$ cores and invariant subsets  are still in bijection and are infinite,
but the relation between them and Dyck paths is more involved. Still, in \cite{GMV16} the authors defined a surjection from $\IMNo$ to $\Dyck(M,N),$ such that the $\dinv$ statistic is constant on the fibers, and the $\area$ statistic behaves in a natural and easily controlled way. In this case one can define $c_{M,N}(q,t)$ by the same equation \eqref{def cMN}. However,  $c_{M,N}(q,t)$ is no longer a polynomial
but a power series. In fact, we will show that it is a rational function with denominator $(1-q)^{d-1}$, where $d =\gcd(M,N)$. 

The work of A. Mellit on Shuffle conjecture can be extended to show that the polynomial $(1-q)^{d-1}c_{M,N}(q,t)$ is symmetric in $q$ and $t$ in the non relatively prime case as well. However, to our knowledge, this did not appear in the literature yet. Note that in the non relatively prime case the coefficients of $(1-q)^{d-1}c_{M,N}(q,t)$ are not necessarily positive anymore (see Examples \ref{ex: 22}, \ref{ex: 33}, and \ref{ex-46}).

One of the most remarkable properties of $c_{M,N}(q,t)$ is its connection to {\em Khovanov-Rozansky homology} of  $(M,N)$ torus links conjectured in \cite{Gor,GN,GNR} and proved by Elias, Hogancamp and Mellit in a series of papers \cite{EH,Hog,Mellit2,HogMellit} in various special cases. See section \ref{sec:homology} for a precise statement.
In short, the comparison between the power series  $c_{M,N}(q,t)$ and the Poincar\'e power series of this homology is proved by obtaining certain recursions on the topological side and then verifying them on combinatorial side. The terms in these recursions are labeled by binary sequences of varying length.

The main objective of this paper is to understand these recursions as clearly as possible in combinatorial terms. 
Given an $(M,N)$ invariant subset $\Delta$, we consider a length $M+N$ binary sequence $\bu=\bu(\Delta)$ recording the characteristic function of the intersection $\Delta\cap [0,M+N-1]$. We define
$$
P_{\bu}(q,t)=\sum_{\Delta\in I_{M,N}, \bu(\Delta)=\bu}q^{\area(\Delta)}t^{\codinv(\Delta)}.
$$
where 
\begin{equation*}
\codinv(\Delta)=\delta(N,M)-\dinv(\Delta),
\end{equation*}
and 
\begin{equation}
\label{eq:delta}
\delta(N,M):=\frac{NM-N-M+\gcd(M,N)}{2}
\end{equation} 
is the maximal possible value of $\dinv.$
In Theorem \ref{Theorem: recursion binary N+M} we prove a simple recursion for the power series $P_{\bu}(q,t)$. 
In Theorem \ref{Theorem: recursion unique} we prove that this recursion has a unique solution given the initial condition
$P_{\one}(q,t)=1$. These results hold both for coprime and non-coprime $(M,N)$. We also observe in Lemma \ref{lem: zero and catalan} that
\begin{equation}
\label{eq: P to catalan}
t^{\delta(N,M)}P_{\zero}(q,t^{-1})=\frac{q^{M+N}}{1-q}c_{M,N}(q,t),
\end{equation}
and hence the function $c_{M,N}(q,t)$ can be computed using this recursion. 

In Section \ref{sec:examples} we write complete decision trees for this recursion for $(M,N)=(2,2),(3,3)$ and $(4,6)$,
and compute the corresponding rational Catalan series. 

In Section \ref{sec:comparison} we compare our recursion with the ones appearing in \cite{EH,Hog,Mellit2,HogMellit}.
One important distinction is that our recursion is labeled by binary sequences of fixed length $M+N$ while their recursion 
is labeled by pairs of binary sequences of varying length. Still, we prove that the recursions are very similar, and the resulting expressions for $c_{M,N}(q,t)$ agree. In Section \ref{sec-new-higher-a} we  add higher $a$-degrees and give recursions for the rational $q,t$--Schr\"oder power series.

\section*{Acknowledgments}

The authors would like to thank Ben Elias, Matt Hogancamp and Anton Mellit for patiently explaining their work to us and 
sharing the early drafts of \cite{HogMellit}. The work of E. G. was partially supported by the NSF grants DMS-1700814 , DMS-1760329 and the Russian Academic Excellence Project 5-100.
The work of M. M. was partially supported by the Simons Foundation Collaboration Grant for Mathematicians, award number 524324.
The work of M.~V.~was partially supported by the Simons Foundation Collaboration Grant for Mathematicians, award number 319233.
The authors also thank the
NSF Focused Research Group ``Algebra and Geometry Behind Link Homology" 
 for financial support and hosting their participation
in the ``Hilbert schemes, categorification and combinatorics'' workshop
at Davis.


\section{The recursion}
\label{sec:recursion}

Let $(M,N)=(dm,dn)$ be a pair of positive integers, where $m$ and $n$ are relatively prime, so $d=\gcd(M,N).$ 

\begin{definition}
The set $\IMN$ of $M,N$-invariant subsets is defined by
\begin{equation*}
I_{M,N}:=\{\Delta\subset\BZ_{\ge 0}: \Delta+N\subset \Delta,\Delta+M\subset\Delta,\sharp\overline{\Delta}<\infty\},
\end{equation*}
where $\Delta+N$ denotes the shift of $\Delta$ by $N,$ i.e. 
\begin{equation*}
\Delta+N:=\{k\in \BZ: k-N\in\Delta\},
\end{equation*}
$\overline{\Delta}:=\BZ_{\ge 0}\setminus\Delta$ is the complement to $\Delta,$ and $\sharp\overline{\Delta}$ is the number of elements in $\overline{\Delta}.$ The elements of $\overline{\Delta}$ are often called \textit{gaps} in $\Delta$.
\end{definition}


We define statistics $\area$ and $\codinv$ on the invariant subsets. The $\area$ statistic simply counts the number of gaps in $\Delta:$

\begin{definition}
We set 
\begin{equation*}
\area(\Delta):=\sharp\overline{\Delta}.
\end{equation*}
\end{definition}
The statistics $\dinv$ and $\codinv$ are  more involved.

\begin{definition}
Let $\Delta\in I_{M,N}$ be an invariant subset. The set $\Ngen(\Delta)$ of $N$-generators of $\Delta$ is defined by 
\begin{equation*}
\Ngen(\Delta):=\Delta\setminus(\Delta+N)=\{\a\in\Delta:\a-N\notin\Delta\}.
\end{equation*}
The $M$-generators are defined similarly:
\begin{equation*}
\Mgen(\Delta):=\Delta\setminus(\Delta+M)=\{\a\in\Delta:\a-M\notin\Delta\}.
\end{equation*}
\end{definition} 


\begin{remark}
The condition $\sharp\overline{\Delta}<\infty$ implies that $\sharp\Ngen(\Delta)=N$, one $N$-generator in each congruence class modulo $N$. 
\end{remark}

\begin{definition}
\label{def:codinv}
We set
\begin{equation*}
\codinv(\Delta)=\sum\limits_{\a\in\Ngen(\Delta)} [\a,\a+M-1]\cap\overline{\Delta},
\qquad \dinv = \delta(N,M) - \codinv(\Delta),
\end{equation*}
where  $\delta(N,M)$ is as in \eqref{eq:delta} and 
we use the \textit{integer interval} notation:
\begin{equation*}
[\a,\a+k]:=\{\a,\a+1,\ldots,\a+k\}.
\end{equation*}
\end{definition}

\begin{remark}
One can check (see e.g \cite{GM1,GM2}) that the definition of $\codinv(\Delta)$
is in fact symmetric in $M$ and $N$.
\end{remark}


\begin{definition}
Let ${\bu}=(u_0,\ldots,u_{N+M-1})$ be a sequence of $0$'s and $1$'s. We set
\begin{equation*}
I_{\bu}:=\{\Delta\in I_{M,N}: \forall \, 0\le i< N+M,\ i\in\Delta \Leftrightarrow u_i=1\}.
\end{equation*}
We say that a sequence $\bu$ is \textit{admissible} if $I_{\bu}\neq \emptyset.$ Note that we number the entries of $\bu$ starting at $0$.
\end{definition}

\begin{definition}
Let the power series $\Pu(q,t)$ be given by
\begin{equation*}
P_{\bu}:=\sum\limits_{\Delta\in I_{\bu}} q^{\area(\Delta)}t^{\codinv(\Delta)}.
\end{equation*}
\end{definition}

\begin{remark}
Note that while the set $I_{\bu}$ is often infinite, the sets $\{\Delta\in I_{\bu}: \area(\Delta)=k\}$ are always finite. Therefore, $P_{\bu}$ is a well defined power series in $q$ and $t$ with positive integer coefficients. 
Observe $\Pu=0$ if $\bu$ is not admissible.
\end{remark}

\begin{lemma}
\label{lem: zero and catalan}
One has 
$$
P_{\zero}(q,t)=q^{M+N}\sum_{\Delta\in \IMN}q^{\area(\Delta)}t^{\codinv(\Delta)},
$$
and
$$
c_{M,N}(q,t)=q^{-N-M}t^{\delta(N,M)}(1-q)P_{\zero}(q,t^{-1}).
$$
\end{lemma}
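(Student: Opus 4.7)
The plan is to first establish the identity
\[
P_{\zero}(q,t)=q^{M+N}\sum_{\Delta\in\IMN}q^{\area(\Delta)}t^{\codinv(\Delta)},
\]
by constructing a bijection between $\IMN$ and $I_{\zero}$ that tracks $\area$ and $\codinv$, and then to derive the formula for $c_{M,N}(q,t)$ by substituting into the defining expression \eqref{def cMN}.

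The first step is to observe that $\Delta\in I_{\zero}$ precisely when $\Delta\cap[0,M+N-1]=\emptyset$, equivalently $\Delta\subset\BZ_{\ge M+N}$. I would then define the shift map $\phi\colon\IMN\to I_{\zero}$ by $\phi(\Delta)=\Delta+(M+N)=\{k+M+N : k\in\Delta\}$, with inverse $\psi(\Delta')=\{k-(M+N) : k\in\Delta'\}$. The routine verifications are: $\phi(\Delta)$ is still closed under the shifts by $M$ and $N$ (since $(\Delta+(M+N))+M = (\Delta+M)+(M+N)\subset \Delta+(M+N)$, and similarly for $N$); $\phi(\Delta)$ is disjoint from $[0,M+N-1]$; and the complement satisfies $\overline{\phi(\Delta)}=[0,M+N-1]\sqcup(\overline{\Delta}+(M+N))$, so that
\[
\area(\phi(\Delta))=\area(\Delta)+(M+N).
\]

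The nontrivial step is to check that $\codinv$ is invariant under $\phi$. I would first show $\Ngen(\phi(\Delta))=\Ngen(\Delta)+(M+N)$: the element $a+M+N$ lies in $\phi(\Delta)$ iff $a\in\Delta$, and $a+M+N-N=a+M\notin\phi(\Delta)$ iff $a-N\notin\Delta$, so membership in $\Ngen$ is preserved by the shift. Next, for any $a\in\Ngen(\Delta)$, I would evaluate $|[a+M+N,\,a+2M+N-1]\cap\overline{\phi(\Delta)}|$. Since $a\ge 0$ forces the interval to sit entirely above $M+N$, it meets $\overline{\phi(\Delta)}$ only in its shifted piece $\overline{\Delta}+(M+N)$, and the count equals $|[a,a+M-1]\cap\overline{\Delta}|$. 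Summing over generators gives $\codinv(\phi(\Delta))=\codinv(\Delta)$, and the first identity follows by reindexing the generating function.

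For the second identity, I would start from \eqref{def cMN}, namely $c_{M,N}(q,t)=(1-q)\sum_{\Delta\in\IMN}q^{\area(\Delta)}t^{\dinv(\Delta)}$, and rewrite $t^{\dinv(\Delta)}=t^{\delta(N,M)}(t^{-1})^{\codinv(\Delta)}$ using $\dinv=\delta(N,M)-\codinv$. Applying the first identity with $t$ replaced by $t^{-1}$ converts the remaining sum into $q^{-(M+N)}P_{\zero}(q,t^{-1})$, yielding
\[
c_{M,N}(q,t)=q^{-N-M}t^{\delta(N,M)}(1-q)P_{\zero}(q,t^{-1}).
\]
The only step requiring real care is the invariance of $\codinv$ under $\phi$; once the shift of $\Ngen$ is verified, the interval-gap bookkeeping is forced because all relevant intervals sit above $M+N$ and therefore miss the newly created block of gaps $[0,M+N-1]$.
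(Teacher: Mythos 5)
Your proposal is correct and follows essentially the same approach as the paper: the paper also identifies $I_{\zero}$ with the image of $\IMN$ under the shift by $M+N$, notes that $\area$ increases by $M+N$ and $\codinv$ is unchanged, and then deduces the second formula from \eqref{def cMN} together with $\dinv+\codinv=\delta(N,M)$. You have simply filled in the verification that the shift preserves $\Ngen$ and hence $\codinv$, which the paper leaves as "easy to see."
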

\begin{proof}
Indeed, for $\bu=\zero$ the set $I_{\bu}$ consists of all $(M,N)$--invariant subsets which do not intersect with 
$[0,M+N-1]$. All such subsets are obtained from $(M,N)$--invariant subsets in $\IMN$ by shift by $(M+N)$. 
It is easy to see that the shift does not change $\codinv$ and changes $\area$ by $(M+N)$. 
The second formula now follows from Equation \eqref{def cMN} and the relation $\dinv(\Delta)+\codinv(\Delta)=\delta(N,M)$.
\end{proof}

\begin{definition} \label{def:rho}
Define
$\rho:\IMN\to \IMN$
to be the shift map
given by
$$
\rho(\Delta)=
\begin{cases}
\Delta-1 & \text{if } 0\notin\Delta
\\
(\Delta\setminus\{0\})-1 & \text{if } 0\in\Delta.
\end{cases}
$$
\end{definition}
\begin{definition} \label{def:lambda}
Let $\bu \in \{0,1\}^{M+N}$. We define
$$\lambda({\bu}):=\sum\limits_{i=0}^{M-1}(u_{i+N}-u_i).$$
If $\Delta \in I_{\bu}$ we set $\lambda(\Delta):= \lambda(\bu)$.
\end{definition}
\begin{remark}
\label{rem: lambda}
Note that if $\Delta \in I_{\bu}$ , then
$\lambda({\bu})$ counts the $N$-generators of $\Delta$
 in the interval $[N,N+M-1],$ or, equivalently, the number of $M$-generators in the interval $[M,N+M-1].$ Indeed,
\begin{equation*}
\begin{aligned}
\lambda(\bu)&=\sum\limits_{i=0}^{M-1}(u_{i+N}-u_i)=\sum\limits_{j=N}^{N+M-1} u_j -\sum\limits_{i=0}^{M-1} u_i\\
&=\sum\limits_{j=M}^{N+M-1} u_j -\sum\limits_{i=0}^{N-1} u_i=\sum\limits_{i=0}^{N-1}(u_{i+M}-u_i),
\end{aligned}
\end{equation*}
as the $u_i$ for $i\in [\min(N,M),\max(N,M)-1]$ cancel out. Also, clearly,
\begin{equation*}
\sum\limits_{i=0}^{M-1}(u_{i+N}-u_i)=\sharp(\Ngen(\Delta)\cap [N,N+M-1]),
\end{equation*}
and
\begin{equation*}
\sum\limits_{i=0}^{N-1}(u_{i+M}-u_i)=\sharp(\Mgen(\Delta)\cap [M,N+M-1]).
\end{equation*}
\end{remark}

\begin{theorem}\label{Theorem: recursion binary N+M}
Let $\bu=(u_0,\ldots,u_{N+M-1})$ be an admissible sequence. Let also 
\begin{equation*}
\begin{aligned}
&\bv=(u_1,\ldots,u_{N+M-1},1),\\
&\bv'=(u_1,\ldots,u_{N+M-1},0).
\end{aligned}
\end{equation*}
The power series $\Pu$ satisfy the following recurrence relation:
\begin{equation}
\label{eq-recursion binary N+M}
\Pu=
\begin{cases} 
q(P_{\bv}+P_{{\bv'}}),& \text{if} \ \ u_0=u_N=u_M=0,\\
qP_{\bv},& \text{if} \ \ u_0=0 \ \  \text{and} \ u_N+u_M>0,\\
t^{\lambda(\bu)}P_{\bv},& \text{if} \ \ u_0=u_N=u_M=1,
\end{cases}
\end{equation}
where $\lambda({\bu}):=\sum\limits_{i=0}^{M-1}(u_{i+N}-u_i).$
\end{theorem}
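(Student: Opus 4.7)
The plan is to realize the three cases of \eqref{eq-recursion binary N+M} as the three possible behaviors of the shift map $\rho$ from Definition \ref{def:rho}, organized by whether $u_0 = 0$ or $u_0 = 1$. Note that admissibility of $\bu$ with $u_0 = 1$ already forces $u_N = u_M = 1$, since $0 \in \Delta$ together with $(M,N)$-invariance gives $N, M \in \Delta$; this explains the absence of a fourth case in \eqref{eq-recursion binary N+M}. In each situation I would exhibit $\rho$ as a bijection from $I_{\bu}$ to the appropriate subset of $I_{\bv} \sqcup I_{\bv'}$ and then track how $\area$ and $\codinv$ transform.

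In the case $u_0 = 0$, the shift $\rho(\Delta) = \Delta - 1$ has initial segment $(u_1, \ldots, u_{M+N-1}, \chi_\Delta(M+N))$, so it lies in $I_{\bv}$ or in $I_{\bv'}$ according to whether $M + N \in \Delta$; the map $\Delta' \mapsto \Delta' + 1$ is the inverse, and $(M,N)$-invariance of $\Delta' + 1$ is immediate from that of $\Delta'$. Because $0 \in \overline{\Delta}$ is the unique gap lost under the shift, $\area(\Delta) = \area(\rho(\Delta)) + 1$. The shift $\a \mapsto \a - 1$ yields a bijection $\Ngen(\Delta) \to \Ngen(\rho(\Delta))$, and for every $\a \in \Ngen(\Delta)$ (necessarily $\a \geq 1$) the sets $[\a, \a + M - 1] \cap \overline{\Delta}$ and $[\a - 1, \a + M - 2] \cap \overline{\rho(\Delta)}$ have the same cardinality (substitute $j = k + 1$), so $\codinv$ is preserved. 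When additionally $u_N + u_M > 0$, I would check that $\bv'$ is inadmissible: if $u_N = 1$ then $\bv'_{N-1} = u_N = 1$ forces $N - 1 \in \Delta'$, hence $M + N - 1 = (N-1) + M \in \Delta'$, contradicting $\bv'_{M+N-1} = 0$; the case $u_M = 1$ is symmetric. Weighting the bijection by $q^{\area} t^{\codinv}$ then yields Cases A and B of \eqref{eq-recursion binary N+M}.

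The case $u_0 = u_N = u_M = 1$ is the most delicate, and the $\codinv$ bookkeeping here is what I expect to be the main obstacle. Now $\rho(\Delta) = (\Delta \setminus \{0\}) - 1$, whose initial segment equals $\bv$ (using $M + N \in \Delta$, forced by $M, N \in \Delta$), and the inverse $\Delta' \mapsto \{0\} \cup (\Delta' + 1)$ exhibits $\rho: I_{\bu} \to I_{\bv}$ as a bijection. Because $0 \in \Delta$, no gap is lost, so $\area(\Delta) = \area(\rho(\Delta))$. To compute $\codinv(\Delta) - \codinv(\rho(\Delta))$ I would describe the $N$-generators explicitly: $\Ngen(\Delta) = \{0, g_1, \ldots, g_{N-1}\}$, where $g_r$ is the minimum element of $\Delta$ in the class $r \pmod{N}$, and $\Ngen(\rho(\Delta)) = \{N - 1, g_1 - 1, \ldots, g_{N-1} - 1\}$. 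The key point is that the class-$0$ generator $0$ of $\Delta$ is replaced by the class-$(N-1)$ generator $N - 1$ of $\rho(\Delta)$, which appears because, with $0 \in \Delta$ and $(M,N)$-invariance, the smallest nonzero multiple of $N$ in $\Delta$ is $N$, shifted down to $N - 1$. The contributions from $g_r$ and $g_r - 1$ cancel pairwise by the previous substitution argument, leaving
\[
\codinv(\Delta) - \codinv(\rho(\Delta)) = \#\bigl([0, M - 1] \cap \overline{\Delta}\bigr) - \#\bigl([N, N + M - 1] \cap \overline{\Delta}\bigr) = \sum_{i = 0}^{M - 1}(u_{i + N} - u_i) = \lambda(\bu),
\]
which gives $\Pu = t^{\lambda(\bu)} P_{\bv}$, i.e.\ Case C.
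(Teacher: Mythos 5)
Your proposal is correct and follows essentially the same strategy as the paper's proof: you use the shift map $\rho$ from Definition~\ref{def:rho}, observe that $\area$ drops by one precisely when $u_0 = 0$, that $\codinv$ is preserved when $u_0 = 0$, and that when $u_0 = 1$ the single $N$-generator $0$ is replaced by $N - 1$, causing $\codinv$ to change by $\#([0,M-1]\cap\overline{\Delta}) - \#([N,N+M-1]\cap\overline{\Delta}) = \lambda(\bu)$. You supply more explicit bookkeeping than the paper does (the pairwise cancellation of contributions from $g_r$ and $g_r - 1$, the observation that $u_0 = 1$ forces $u_N = u_M = 1$ by admissibility, and the direct argument that $\bv'$ is inadmissible when $u_N + u_M > 0$), but these are precisely the details the paper leaves to the reader, not a different route.
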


\begin{proof}
With respect to the statistics above, the shift map $\rho$
of Definition \ref{def:rho} has the following properties:
\begin{enumerate}
\item[(a)] If $0\notin \Delta,$ then $\area(\rho(\Delta))=\area(\Delta)-1,$ while if $0\in \Delta,$ then $\area(\rho(\Delta))=\area(\Delta).$
\item[(b)] If at least one of the numbers $N$ and $M$ belongs to $\Delta,$ then $N+M-1\in\rho(\Delta),$ while if neither $N$ nor $M$ are in $\Delta,$ then either possibility $N+M-1\in\rho(\Delta)$ or $N+M-1\notin\rho(\Delta)$ may occur.
\item[(c)] If $0\notin \Delta,$ then $\codinv(\rho(\Delta))=\codinv(\Delta).$
\item[(d)] If $0\in \Delta,$ then 
\begin{equation*}
\codinv(\rho(\Delta))=\codinv(\Delta)-\sharp\left([0,M-1]\cap\overline{\Delta}\right)+\sharp\left([N,N+M-1]\cap\overline{\Delta}\right).
\end{equation*}
\end{enumerate}
To prove part (d), one should observe that all the $N$-generators of $\Delta,$ except $0,$ are simply shifted down by one in $\rho(\Delta),$ while retaining the same contributions to $\codinv.$ The $N$-generator $0\in\Delta$ get replaced by $N-1\in\rho(\Delta).$ The contribution to $\codinv$ changes accordingly,
and this change is measured by $\lambda(\bu)$ for $\Delta \in I_{\bu}$. 
\end{proof}

\begin{definition}
\label{def: decision tree}
We visualize the recursion \eqref{eq-recursion binary N+M} using the {\it decision tree}.
Each node corresponds to a binary sequence $\bu$ and the edges connect $\bu$ with $\bv$ and $\bv'$
and are labeled by the corresponding coefficients:
\begin{center}
\begin{tabular}{p{2in} p{2in} p{2in}}
\quad case 1 & case 2 & case 3 \\
\begin{forest}
[$0\bw$, s sep=7mm
[$\bw 1$,  edge label={node[left,midway]{q}}, edge=->]
[$\bw 0$,  edge label={node[right, midway]{q}},edge=->]]
\end{forest}
&
\begin{forest}
[$0 \bw$
[$\bw 1$,  edge label={node[right,midway]{q}},edge=->]
]
\end{forest}
&
\begin{forest}
[$1\bw$
[$\bw 1$,  edge=red, ,edge=->,
	edge label={node[right,midway,red]{$t^{\lambda(1\bw)}$}}]]
\end{forest}
\end{tabular}
\end{center}
Here $\bw \in \{0,1\}^{M+N-1}$, $\bu=0\bw$ in cases 1 and 2 and $\bu=1\bw$ in case 3,
$\bv=\bw1$ and $\bv'=\bw0$. 
Note that we can view case 2 as a special instance of case 1 for which
$\bw 0$ is {\it not} admissible and so $P_{\bw 0}=0$.
We color edges and labels in case 3 in {\textcolor{red}{red}}
to emphasize that these carry powers of $t$ while
all other (black) edges are labeled by $q$.
\end{definition}

\begin{remark}
\label{rem: decision tree}
If we never identify vertices with the same label, we will
indeed get an infinite tree. However, it is convenient to make the graph finite by 
keeping each $\one$ as a terminal vertex, and identifying the pairs of vertices with the same label, whenever one vertex is a predecessor of another. This leads to directed cycles, which we analyze below. See also 
the examples in Section \ref{sec:examples}.
\end{remark}

\begin{definition}
\label{def-periodic}
We will call $\bu \in \{0,1\}^{M+N}$ {\em $p$-periodic} if
for all $i\in \Z$, $\tilde u_i = \tilde u_{i+p}$, where
$\tilde \bu$ is the infinite sequence formed via
$\tilde u_{i +r(M+N)} := u_i$ for $r \in \Z$, $0 \le i < M+N$.
\end{definition}
\begin{lemma}
\label{lemma-periodic}
For $\bu$ admissible, $\lambda(\bu)=0$ if and only if $\bu$ is $p$-periodic
for some $p \mid \gcd(M,N)$.
\end{lemma}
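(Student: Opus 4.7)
The plan is to exploit the additional structure provided by admissibility: since $\Delta\in I_{\bu}$ satisfies $\Delta+N\subset\Delta$ and $\Delta+M\subset\Delta$, each summand in the expression for $\lambda(\bu)$ is a priori nonnegative. Concretely, for $i\in[0,M-1]$, if $u_i=1$ then $i\in\Delta$, hence $i+N\in\Delta$, hence $u_{i+N}=1$; so $u_{i+N}\ge u_i$. Thus $\lambda(\bu)=\sum_{i=0}^{M-1}(u_{i+N}-u_i)=0$ forces $u_{i+N}=u_i$ term by term. Using the symmetric form $\lambda(\bu)=\sum_{i=0}^{N-1}(u_{i+M}-u_i)$ from Remark \ref{rem: lambda} together with $\Delta+M\subset\Delta$, we likewise get $u_{i+M}=u_i$ for $i\in[0,N-1]$. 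So the first step is to record these two termwise equalities as the content of ``$\lambda(\bu)=0$'' under admissibility.

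For the easy direction $(\Leftarrow)$, suppose $\bu$ is $p$-periodic with $p\mid d=\gcd(M,N)$. Then $p\mid N$, so $\tilde u_{i+N}=\tilde u_i$ for every $i\in\Z$. Restricting to $i\in[0,M-1]$ gives $u_{i+N}=u_i$, and the sum defining $\lambda(\bu)$ collapses to $0$.

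For the main direction $(\Rightarrow)$, the goal is to promote the two finite-range equalities of Step 1 into a genuine period of the infinite extension $\tilde\bu$. Set $L=M+N$. By definition $\tilde\bu$ is $L$-periodic. I claim $\tilde u_{j+N}=\tilde u_j$ for every $j\in\Z$. Because both sides are $L$-periodic in $j$, it suffices to check $j\in[0,L-1]$. For $j\in[0,M-1]$ this is exactly $u_{j+N}=u_j$. For $j\in[M,L-1]$, write $j=M+i$ with $i\in[0,N-1]$ and use $L$-periodicity: $\tilde u_{j+N}=\tilde u_{i+L}=u_i$, while $\tilde u_j=u_{M+i}=u_i$ by the second equality from Step 1. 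This verifies the claim.

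Hence $N$ is a period of $\tilde\bu$, and so is $L$; therefore $\gcd(N,L)=\gcd(N,M+N)=\gcd(M,N)=d$ is also a period. In particular $\bu$ is $d$-periodic, and $d\mid\gcd(M,N)$, which is the desired conclusion with $p=d$. The only subtle point, and the main thing to get right, is the bookkeeping in the middle step that combines the two one-sided range equalities with $L$-periodicity to obtain full translation invariance by $N$ on all of $\Z$; everything else is immediate from the two forms of $\lambda(\bu)$ and elementary properties of periods.
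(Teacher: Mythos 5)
Your proof is correct and takes essentially the same approach as the paper: under admissibility the summands in $\lambda(\bu)$ are nonnegative, so $\lambda(\bu)=0$ forces the termwise equalities $u_{i+N}=u_i$ (for $i\in[0,M-1]$) and $u_{i+M}=u_i$ (for $i\in[0,N-1]$), from which periodicity follows. One small economy in your version: rather than proving $N$-periodicity and $M$-periodicity of $\tilde\bu$ by two parallel arguments (as the paper does, invoking "a similar argument" for the $M$ case), you prove only $N$-periodicity and then observe that $\tilde\bu$ is tautologically $(M+N)$-periodic, so $\gcd(N,M+N)=\gcd(M,N)$ is automatically a period.
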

\begin{proof}
The $\Leftarrow$ direction is clear. For the $\Rightarrow$ direction, it suffices
to show
$\bu$ is both $M$-periodic and $N$-periodic.

First suppose $0\le i < M$.
Recall $\bu$ being admissible means $u_{i+N}= 0 \implies u_i  = 0$.  
Suppose $0=u_i = \tilde u_i$.
Then $0 =\lambda(\bu) = \sum_{i=0}^{M-1} (u_{i+N}-u_i)$ forces
$\tilde u_{i+N} = u_{i+N}=0$ as well. 

Next suppose $M\le i < M+N$, so $0 \le i-M < N$.
If $0 = \tilde u_i = u_i = u_{(i-M)+M}$,
then the admissibility of $\bu$
 implies $0 =  u_{i-M}= \tilde u_{i+N}$.
On the other hand, if $0 = \tilde u_{i+N}=  u_{i-M}$
then $0 =\lambda(\bu) = \sum_{i=0}^{N-1} (u_{i+M}-u_i)
= \sum_{i=M}^{M+N-1} (u_{i}-u_{i-M}) $ forces
$\tilde u_{i} = u_{i}=0$ as well. 
These arguments show that for all $i$, $\tilde u_i = 0 \iff \tilde u_{i+N}=0$ and so $\bu$ is $N$-periodic. 
A similar argument shows $\bu$ is $M$-periodic.
\end{proof}

\begin{theorem}
\label{Theorem: recursion unique}
The recursion in Theorem \ref{Theorem: recursion binary N+M} has a unique solution given the initial condition 
$P_{\one}(q,t)=1.$ Moreover, for any sequence $\bu$ the power series $P_{\bu}(q,t)$ can be expressed as a rational function with the denominator $\prod_{i=1}^d (1-q^{\ell_i}),$ where $0<\ell_i<d$ for all $i.$ 
\end{theorem}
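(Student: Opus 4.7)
My approach is to establish uniqueness and the rational-function structure simultaneously by induction on the \emph{depth} $k(\bu) := \#\{i : u_i = 0\}$, with base case $k(\bu)=0$ giving $\bu = \one$ and $P_{\one}=1$ by hypothesis.

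For the inductive step, I would separate the edges of the recursion (Theorem \ref{Theorem: recursion binary N+M}) into \emph{depth-decreasing} ones (case 1's $\bv = \bw 1$ and case 2, each replacing a leading $0$ by a trailing $1$) and \emph{depth-preserving} ones (case 1's $\bv' = \bw 0$ and case 3, both true cyclic shifts). Since every admissible $\bu$ has at most one depth-preserving outgoing edge, the depth-preserving subgraph restricted to sequences of fixed depth is a functional graph on a finite vertex set; each connected component is either a tree terminating at a case-2 ``sink'' (whose sole outgoing edge is depth-decreasing) or a tree flowing into a unique terminal cycle. In the sink situation, finitely many back-substitutions express $\Pu$ as a $\Z[q,t]$-linear combination of $P_{\bw}$'s with $k(\bw) < k(\bu)$, to which the inductive hypothesis applies directly.

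The cycle case is the crux. Let $\bu^{(0)} \to \bu^{(1)} \to \cdots \to \bu^{(p-1)} \to \bu^{(0)}$ be a terminal cycle. All cycle vertices are cyclic shifts of each other, and each one satisfies $u_0 = u_N = u_M$, so the underlying sequence is cyclically invariant under shifts by both $M$ and $N$, hence by $d = \gcd(M,N)$. By Lemma \ref{lemma-periodic} this forces $p \mid d$ and $\lambda(\bu^{(i)}) = 0$ for every cycle vertex; in particular each case-3 edge in the cycle carries label $t^0 = 1$ while each case-1 edge carries label $q$. The total label around the cycle is therefore $q^a$, where $a$ is the number of zeros in one period of $\bu^{(0)}$; a direct check gives $1 \le a \le p \le d$, with $a=p$ only in the trivial self-loop at $\zero$ (where $a = p = 1$). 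Unrolling the recursion once around the cycle yields
\[
(1 - q^a)\,P_{\bu^{(0)}} = \text{a finite $\Z[q,t]$-combination of $P_{\bw}$'s with $k(\bw) < k(\bu^{(0)})$,}
\]
and since $1 - q^a$ has constant term $1$ it is a unit in $\Z[[q]]$. This determines $P_{\bu^{(0)}}$ uniquely, after which back-substitution determines $\Pu$ on the entire connected component, yielding both uniqueness and the inductive step.

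For the denominator count, I would use that the cycles just described can only occur at depths $k$ divisible by $m + n = (M+N)/d$: indeed, the number of zeros per period of a $p$-periodic cycle vertex equals $kp/(M+N) \in \Z$, and $p \mid d$ combined with $(M+N)/p$ being a multiple of $m+n$ forces $(m+n) \mid k$. Hence cycle depths lie in $\{0,\, m+n,\, 2(m+n),\, \ldots,\, d(m+n) = M+N\}$, of which at most $d$ are nontrivial (the base $k=0$ contributes nothing since $\lambda(\one)=0$). A descent from $\bu$ to $\one$ visits each cycle depth at most once, so the induction accumulates at most $d$ factors $(1-q^{\ell_i})^{-1}$ with $1 \le \ell_i < d$, giving the claimed form $\prod_{i=1}^{d}(1-q^{\ell_i})$ after trivial padding; in the coprime case $d=1$ the product is empty, recovering the polynomiality of $P_{\bu}$. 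The main technical obstacle I foresee is the careful bookkeeping at this last step: verifying the strict bound $\ell_i < d$ in all cases (ruling out $a = p = d$ outside the $\zero$ self-loop) and controlling how the factors contributed by different branches of the recursion, which may reach distinct cycles at the same cycle depth, combine via LCM or cancellation into a single denominator with no more than $d$ factors.
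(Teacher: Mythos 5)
Your proof is correct and essentially matches the paper's: induction on the number of zeros, classifying the recursion steps into depth-preserving cyclic shifts versus depth-decreasing ones, using Lemma~\ref{lemma-periodic} to deduce that cycle vertices are $p$-periodic for $p\mid d$ with $\lambda=0$ (hence cycle coefficient $q^a$), and bounding the number of cycle depths by $d$ via the observation that periodic states have a number of ones that is a multiple of $m+n$. One small slip worth flagging: for $d=1$ and $\bu=\zero$ the series $P_{\zero}$ is \emph{not} a polynomial (the self-loop at $\zero$ still contributes a factor $1/(1-q)$), though this merely reflects that the bound $0<\ell_i<d$ in the theorem's statement is itself vacuous when $d=1$, a wrinkle the paper's own proof shares.
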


\begin{proof}
We prove the statement by induction on the number $k$
 of zeroes in the sequence $\bu$. If there are no zeroes, i.~e.~ $k=0$,
 we have
$\bu = \one$ and 
$P_{\one}=1$. Assume now that a sequence $\bu$ has $k$ zeroes and
$k \ge 1$. Assume also that $\bu$ is admissible, since otherwise
$\Pu = 0$. Note that no cases of the recursion increase the number of $0$s in the sequence, and the case 2 recursion steps always decrease it.

Let us apply the recursion relation to $P_{\bu}.$ If it is a case 2 relation, then the number of $0$'s decreased, and we can determine $P_{\bu}$ by induction. Otherwise, we get exactly one term with exactly $k$ zeros in the next step of the recursion. Note that the corresponding sequence is simply a cyclic rotation of $\bu:$\ $0\bw\to \bw 0$ in case 1 and $1\bw\to\bw 1$ in case 3. Let us keep applying the recursion to the terms with $k$ zeros, until either there are no such terms, and $P_{\bu}$ can be determined by induction, or the term $P_{\bu}$ is repeated. It is not hard to see that in the latter case the sequence $\bu$ is both $M$- and $N$- periodic.
 Indeed, otherwise one would have to use a case 2 recursion relation at some point.

We get the linear equation
\begin{equation}
\label{eq:cycle}
P_{\bu}=\gamma P_{\bu}+\sum_{j \in J} \gamma_j P_{\bw^{(j)}},
\end{equation}
 where the binary sequences $P_{\bw^{(j)}}$ contain  $k-1$ zeros each, $\gamma, \gamma_j$ are some monomials in $q$ and $t,$ and $\gamma\neq 1$.Indeed, since $\bu$ contains $k>0$ zeros, a case 1 relation must occur at least once. By the inductive hypothesis, we can compute the series $P_{\bw^{(j)}}$ for all $j\in J$, and then we can solve \eqref{eq:cycle} to obtain $P_{\bu}$.

Finally, one can show that the constant $\gamma$ in \eqref{eq:cycle} is just a power of $q$. Indeed, let $p|\gcd(M,N)$ be the period of $\bu$.
Then by Lemma \ref{lemma-periodic}, $\lambda(\bu)=0,$
hence all the case 3 edges carry weight $1 = t^0$.
Finally, if $\bu$ contains $k$ zeroes then the number of zeroes in the period equals $\frac{kp}{M+N}$,
so
$
\gamma=q^{\frac{kp}{M+N}} 
$
and from \eqref{eq:cycle} we get
$$
P_{\bu}=\frac{1}{1-q^{\frac{kp}{M+N}}}\sum_{j\in J} \gamma_j P_{\bw^{(j)}}.
$$

Note that $0<\frac{kp}{M+N}<d,$ and that one will use the equation \ref{eq:cycle} in the computation of $P_{\bu}(q,t)$ at most $d$ times. Indeed, this equation is only applied when $\bu$ is periodic with period dividing $d,$ and each time it is applied the number of $1$'s in the sequence increases. Therefore, in the end (before reducing) one gets a rational function with the denominator equal to
$\prod_{i=1}^d (1-q^{\ell_i}),$ where $0<\ell_i<d$ for all $i.$

\end{proof}

We will see in Section \ref{sec:comparison}, that the denominator of $P_{0^{N+M}}$ can, in fact, be reduced to $(1-q)^d.$

\section{Examples}
\label{sec:examples}

In this section we present some examples of decision trees defined in Definition \ref{def: decision tree} and Remark \ref{rem: decision tree}. 
As all black edges have weight $q$, we can drop this label.
Further, it is sometimes convenient to just record the (new) rightmost
entry at each node, simplifying the picture in Definition \ref{def: decision tree} as follows: 

\begin{center}
\begin{forest}
[{}, s sep=7mm [1] [0]]
\end{forest}
\qquad
\qquad
\begin{forest}
[{} [1] ]
\end{forest}
\qquad
\qquad
\begin{forest}
[{} [1,  edge=red, edge label={node[right,midway,red]
	{$t^{\lambda(\text{parent} )}$}}]]
\end{forest}
\end{center}
Also, we will replace all branches with a single terminal vertex by the corresponding monomial.
We will refer to the result as to ``compact decision tree''.

\begin{example}
\label{ex: 22}
The decision tree  for $(M,N)=(2,2)$ is shown in Figure \ref{fig: 22}.
We immediately compute
$$
P_{1011}=qt,\quad P_{0101}=q(P_{1011}+P_{0101})=q^2t+qP_{0101},
$$
so
$$
P_{0101}=\frac{q^2t}{1-q}.
$$
Now 
$$
P_{0001}=q^3+q^2P_{0101}=q^3+\frac{q^4t}{1-q}.
$$
Finally, 
$$
P_{0000}=qP_{0001}+qP_{0000},
$$
hence
$$
P_{0000}=\frac{qP_{0001}}{1-q}=\frac{q^4}{1-q}+\frac{q^5t}{(1-q)^2}.
$$
Observe the  $q, t$-symmetry of
$$
(1-q)c_{2,2}(q,t)=q^{-4} t (1-q)^2 P_{0000}(q, t^{-1}) = {q+t-qt}.
$$
\end{example}


\begin{figure}[ht!]
\begin{multicols}{2}
{\scalefont{0.7}
\begin{forest}
for tree={s sep= 2em, l=2ex}
[0000, name=root
[000\color{blue}{1}, edge label={node[right,midway] {$q$}}, edge=->
  [001\color{blue}{1}, edge label={node[left, midway] {$q$}}, edge=-> 
[011\color{blue}{1}, edge label={node[right, midway] {$q$}}, edge=->
[111\color{blue}{1}, edge label={node[right, midway] {$q$}}, edge=->]]]
[001\color{blue}{0}, edge label={node[right, midway] {$q$}}, edge=->
[010\color{blue}{1}, edge label={node[right, midway] {$q$}}, edge=->,
name=loop
   [101\color{blue}{1}, edge label={node[left, midway] {$q$}}, edge=->
 [011\color{blue}{1}, edge=red, edge=->, edge label={node[midway,right, red]{t}} 
[111\color{blue}{1}, edge label={node[right, midway] {$q$}}, edge=->]]]
[101\color{blue}{0}, edge label={node[right, midway] {$q$}}, edge=->, name=end] ]
] ] ]
{\scalefont{1.3}
\draw[->, red] (end) .. controls +(south east:1cm) and +(east:2cm) .. 
node[near start,right, red]{$\;1$} (loop);
\draw[->] (root) .. controls +(south east:2cm) and +(east:2cm) .. 
node[near start,right]{\quad $q$} (root);
}
\end{forest}
}

\columnbreak

{\scalefont{0.7}
\begin{forest}
for tree={s sep= 2em, l=1ex}
[0000, name=root
[\color{blue}{1}
 [\color{blue}{1}\\{$q^2$}, align=right, base=top, name=0011  ]
 [\color{blue}{0} [\color{blue}{1}, name=loop
        [\color{blue}{1}\\{$qt$}, align=right, base=top, name=1011 ] 
                                [\color{blue}{0}, name=end] ]
                        ]
 ] ]
{\scalefont{1.3}
\draw[->, red] (end) .. controls +(south east:1cm) and +(east:2cm) .. 
node[near start,right, red]{$1$} (loop);
\draw[->] (root) .. controls +(south east:2cm) and +(east:2cm) .. 
node[near start,right]{\quad $q$} (root);
}
\end{forest}
}
\end{multicols}
\caption{
Decision tree for $(M,N)=(2,2)$ and the corresponding compact decision tree on the right.}
\label{fig: 22}
\end{figure}
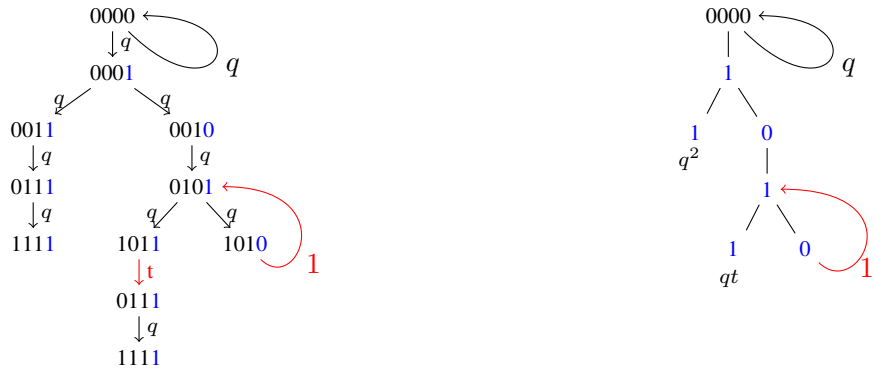


\begin{example}
\label{ex: 33}
The decision tree for the recursion for $(M,N)=(3,3)$ is shown in Figures \ref{fig: 33} and \ref{fig: 33 sub}.
The compact version is shown in Figure \ref{fig: 33 total}.
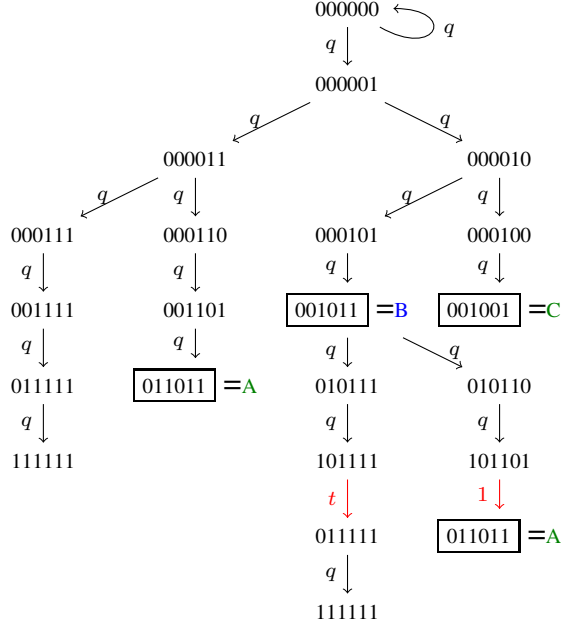
\begin{figure}[ht!]
\begin{tikzpicture}
\node (A) at (10,5)  {\tiny{000000}};
\node (B) at (10,4)  {\tiny{000001}};
\node (C) at (8,3)    {\tiny{000011}};
\node (D) at (12,3)  {\tiny{000010}};
\node (E) at (6,2)  {\tiny{000111}};
\node (F) at (8,2)  {\tiny{000110}};
\node (G) at (10,2)  {\tiny{000101}};
\node (H) at (12,2)  {\tiny{000100}};
\node (I) at (6,1)  {\tiny{001111}};
\node (J) at (8,1)  {\tiny{001101}};
\node (K) at (10,1)  {\fbox{\tiny{001011}} =\color{blue}{\tiny{B}}};
\node (L) at (12,1)  {\fbox{\tiny{001001}} =\color{mygreen}{\tiny{C}}};
\node (M) at (6,0)  {\tiny{011111}};
\node (N) at (8,0)  {\fbox{\tiny{011011}} =\color{mygreen}{\tiny{A}}};
\node (O) at (10,0)  {\tiny{010111}};
\node (P) at (12,0)  {\tiny{010110}};
\node (Q) at (6,-1)  {\tiny{111111}};
\node (R) at (10,-1)  {\tiny{101111}};
\node (S) at (12,-1)  {\tiny{101101}};
\node (T) at (10,-2)  {\tiny{011111}};
\node (U) at (12,-2)  {\fbox{\tiny{011011}} =\color{mygreen}{\tiny{A}}};
\node (V) at (10,-3)  {\tiny{111111}};

\draw 
 (A) edge[in=0, out=330,loop] node[right] {\tiny{$q$}} (A)
    edge[->] node[left] {\tiny{$q$}} (B)  
 (B) edge[->] node[left] {\tiny{$q$}} (C)
    edge[->] node[right]{ \tiny{ $q$}}  (D)
 (C) edge[->] node[left] {\tiny{$q$}}  (E)
      edge[->] node[left] {\tiny{$q$}}  (F)
 (D) edge[->] node[left] {\tiny{$q$}} (G)
       edge[->] node[left]{\tiny{$q$}}  (H)
 (E) edge[->] node[left] {\tiny{$q$}}  (I)
 (F) edge[->] node[left] {\tiny{$q$}}  (J)
 (G) edge[->] node[left] {\tiny{$q$}}  (K)
 (H) edge[->] node[left] {\tiny{$q$}}  (L)
  (I) edge[->] node[left] {\tiny{$q$}}  (M)
 (J) edge[->] node[left] {\tiny{$q$}}  (N)
 (K) edge[->] node[left] {\tiny{$q$}}  (O)
      edge[->] node[right] { \tiny{ $q$}}  (P) 
 (M) edge[->] node[left] {\tiny{$q$}}  (Q)
 (O) edge[->] node[left] {\tiny{$q$}}  (R)
 (P) edge[->] node[left] {\tiny{$q$}}  (S)
 (R) edge[->, red] node[left] {\tiny{$\color{red}{t}$}}  (T)
 (S) edge[->, red] node[left] {\tiny{$\color{red}{1}$}}  (U)
 (T) edge[->] node[left] {\tiny{$q$}}  (V)
;
\end{tikzpicture}
\caption{Decision tree for $(M,N)=(3,3).$ See Figure \ref{fig: 33 sub} for the inserts A and C.}
\label{fig: 33}
\end{figure}
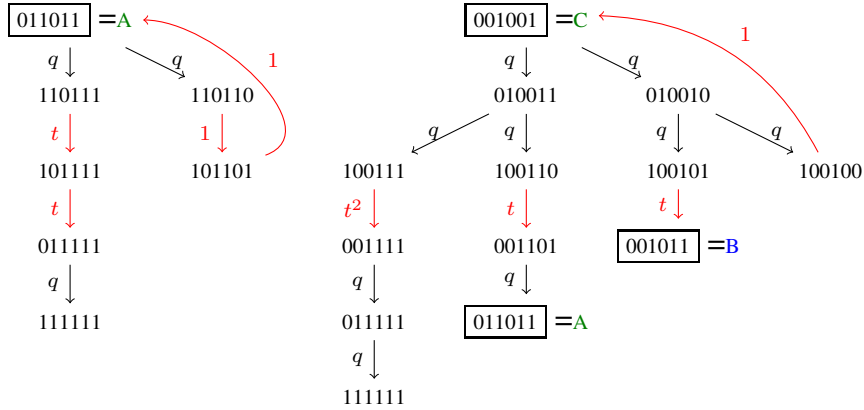
\begin{figure}[ht!]
\begin{tikzpicture}
\node (A) at (2,5)  {\fbox{\tiny{011011}} =\color{mygreen}{\tiny{A}}};
\node (B) at (2,4)  {\tiny{110111}};
\node (C) at (2,3)    {\tiny{101111}};
\node (D) at (2,2)  {\tiny{011111}};
\node (E) at (2,1)  {\tiny{111111}};
\node (F) at (4,4)  {\tiny{110110}};
\node (G) at (4,3)  {\tiny{101101}};
\node (I) at (8,5)  {\fbox{\tiny{001001}} =\color{mygreen}{\tiny{C}}};
\node (J) at (8,4)  {\tiny{010011}};
\node (K) at (10,4)  {\tiny{010010}};
\node (L) at (6,3)  {\tiny{100111}};
\node (M) at (6,2)  {\tiny{001111}};
\node (N) at (6,1)  {\tiny{011111}};
\node (O) at (6,0)  {\tiny{111111}};
\node (P) at (8,3)  {\tiny{100110}};
\node (R) at (8,2)  {\tiny{001101}};
\node (S) at (8,1)  {\fbox{\tiny{011011}} =\color{mygreen}{\tiny{A}}};
\node (T) at (10,3)  {\tiny{100101}};
\node (U) at (10,2)  {\fbox{\tiny{001011}} =\color{blue}{\tiny{B}}};
\node (V) at (12,3)  {\tiny{100100}};

\draw 
 (A) edge[->] node[left] {\tiny{$q$}} (B)
    edge[->] node[right] { \tiny{ $q$}} (F)
 (B) edge[->,red] node[left] {\tiny{$\color{red}{t}$}} (C)
 (C)   edge[->, red] node[left]{\tiny{$\color{red}{t}$}}  (D)
 (D) edge[->] node[left] {\tiny{$q$}}  (E)
  (F)    edge[->, red] node[left] {\tiny{$\color{red}{1}$}}  (G)
 (I) edge[->] node[left] {\tiny{$q$}} (J)
       edge[->] node[right]{ \tiny{ $q$}}  (K)
 (J) edge[->] node[left] {\tiny{$q $} }  (L)
      edge[->] node[left] {\tiny{$q$}}  (P)
 (L) edge[->, red] node[left] {\tiny{$\color{red}{t^2}$}}  (M)
 (M) edge[->] node[left] {\tiny{$q$}}  (N)
  (N) edge[->] node[left] {\tiny{$q$}}  (O)
 (P) edge[->, red] node[left] {\tiny{$\color{red}{t}$}}  (R)
 (R) edge[->] node[left] {\tiny{$q$}}  (S)
  (K)    edge[->] node[left] {\tiny{$q$}}  (T)
      edge[->] node[right] { \tiny{ $q$}}  (V)
 (T) edge[->,red] node[left] {\tiny{$\color{red}{t}$}}  (U)
 (G) edge[in=0,out=20, above, ->, red] node[above right] {\tiny{$\color{red}{1}$}}  (A)
 (V) edge[bend right,->, red] node[above right] {\tiny{$\color{red}{1}$}}  (I)
;
\end{tikzpicture}
\caption{The subgraphs A and C for $(M,N)=(3,3)$ (see Figure \ref{fig: 33} for the main graph).}
\label{fig: 33 sub}
\end{figure}

We first compute the value of the loop
$$
A=P_{011011}=q^2t^2+Aq,\quad A=\frac{q^2t^2}{1-q}.
$$
Next we compute the values of  
$$
B=P_{001011}=q^3t+q^2A=q^3t+\frac{q^4t^2}{1-q},
$$
and
$$
C=q^4t^2+q^3tA+q^2tB+q^2C=q^4t^2+q^3tA+q^5t^2+q^4tA+q^2C,
$$
hence
\begin{equation}
\label{eq: C}
C=\frac{q^4t^2+q^3tA+q^5t^2+q^4tA}{1-q^2}=\frac{(1+q)(q^4t^2+q^3tA)}{1-q^2}=\frac{q^4t^2}{1-q}+\frac{q^5t^3}{(1-q)^2}.
\end{equation}
Finally,
$$
(1-q)P_{000000}=q^6+q^5A+q^4B+q^4C=q^6+\frac{q^7t^2}{1-q}+q^7t+\frac{q^8t^2}{1-q}+\frac{q^8t^2}{1-q}+\frac{q^9t^3}{(1-q)^2}
$$
$$=
\frac{q^6}{(1-q)^2}\left(q^3t^3 - 2q^3t^2 + q^3t + q^2t^2 - 2q^2t + qt^2 + q^2 + qt - 2q + 1\right).
$$
Or, in a positive form 
\begin{equation*}
\frac{P_{000000}}{q^6}=\frac{1+qt}{1-q}+\frac{qt^2+2q^2t^2}{(1-q)^2}+\frac{q^3t^3}{(1-q)^3}.
\end{equation*}

Observe the $q, t$-symmetry of 
\begin{align*}
(1-q)^2c_{3,3}(q,t)&=t^3 q^{-6}(1-q)^3 P_{000000}(q, t^{-1})\\
&=q^3t^2+q^2t^3 - 2q^3t-2qt^3 + q^3+t^3  +q^2t+qt^2- 2q^2t^2 +qt
\end{align*}

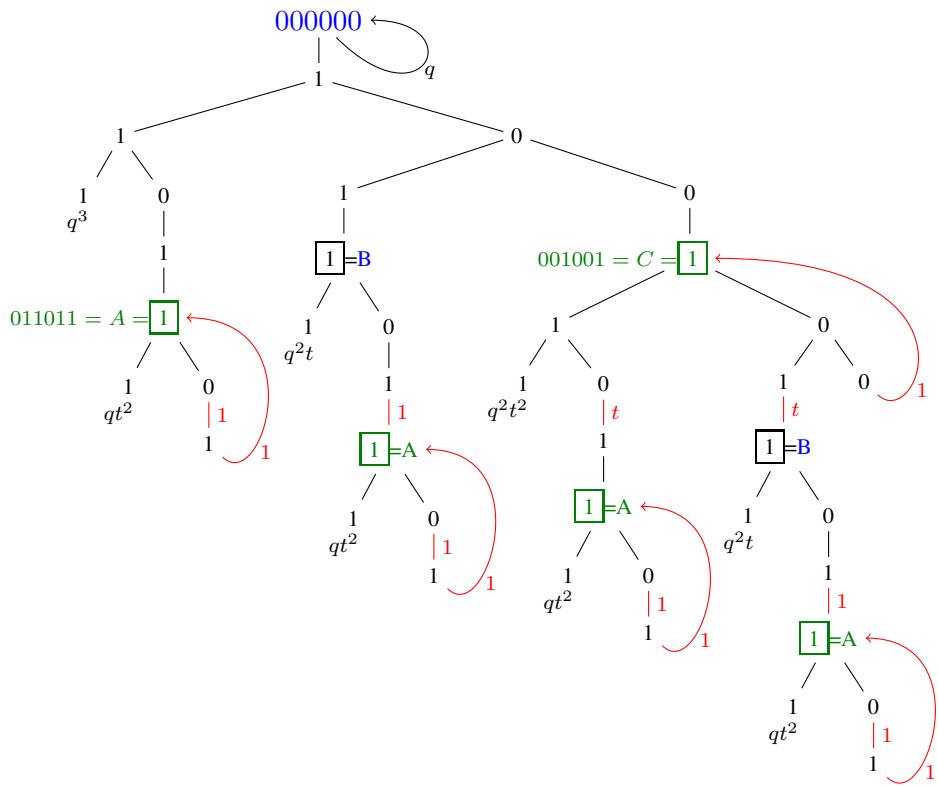
\begin{figure}
{\scalefont{0.7}
\begin{forest}
for tree={s sep= 7mm, l=0.5mm}
[{\scalefont{1.3}${\color{blue}000000}$}, name=root
[1 [1  [1\\{$q^3$}, align=right, base=top, name=000111]
       [0 [1 [\color{mygreen}{\fbox{1}}, name=A
[1\\{$qt^2$}, align=right,base=top, name=110111]
[0 [1, name=101101,edge=red, edge label={node[midway,right, red]{$1$}}]]]
]]]
  [0 [1 [{\fbox{1}=\color{blue}{B}}, name=B, 
	[1\\{$q^2t$}, align=right,base=top, name=010111]
	[0 [1 [\color{mygreen}{\fbox{1}={A}},name=A2,edge=red, edge label={node[midway,right, red]{$1$}}
[1\\{$qt^2$}, align=right,base=top, name=110111-2]
[0 [1, name=101101-2,edge=red, edge label={node[midway,right, red]{$1$}}]]] ]
] ]] 
     [0 [\color{mygreen}{ \fbox{1}}, name=C
[1 [1\\{$q^2t^2$}, align=right,base=top, name=100111]
  [0,[1, name=001101,edge=red, edge label={node[midway,right, red]{$t$}}
  [\color{mygreen}{\fbox{1}={A}},name=A3
[1\\{$qt^2$}, align=right,base=top, name=110111-3]
[0 [1, name=101101-3,edge=red, edge label={node[midway,right, red]{$1$}}]]]
]]]
  [0 [1 [{\fbox{1}=\color{blue}{B}}, name=B2, edge=red, edge label={node[midway,right, red]{$t$}}
	[1\\{$q^2t$}, align=right,base=top, name=010111-2]
	[0 [1 [\color{mygreen}{\fbox{1}={A}},name=A4,edge=red, edge label={node[midway,right, red]{$1$}}
[1\\{$qt^2$}, align=right,base=top, name=110111-4]
[0 [1, name=101101-4,edge=red, edge label={node[midway,right, red]{$1$}}]]] ]
]]]
	[0, name=100100]]]
]] ]]
\draw[->] (root) .. controls +(south east:2cm) and +(east:2cm) .. node[near start,right]{\quad $q$} (root);
\node at (A) [left] { ${\color{mygreen}{011011=A}=}$\; };
\draw[->, red] (101101) .. controls +(south east:1cm) and +(east:2cm) ..  node[near start,right, red]{$1$} (A);
\draw[->, red] (101101-2) .. controls +(south east:1cm) and +(east:2cm) ..  node[near start,right, red]{$1$} (A2);
\draw[->, red] (101101-3) .. controls +(south east:1cm) and +(east:2cm) ..  node[near start,right, red]{$1$} (A3);
\draw[->, red] (101101-4) .. controls +(south east:1cm) and +(east:2cm) ..  node[near start,right, red]{$1$} (A4);
\draw[->, red] (100100) .. controls +(south east:1cm) and +(east:4cm) ..  node[near start,right, red]{$1$} (C);
\node at (C) [left]{$\color{mygreen}{001001=C = } \,$};
\end{forest}
}
\caption{Compact decision tree for $(M,N)=(3,3)$ . 
}
\label{fig: 33 total}
\end{figure}

\end{example}

\begin{example}
\label{ex-46}
The decision tree  for $(M,N)=(4,6)$ is shown in
Figure \ref{fig: 46}.
We will use the shorthand notations $P_{0^{10}} := P_{0000000000}$
and $P_{(01)^5} = P_{0101010101}$ and so on. 
We compute
$$
P_{(01)^5} = q P_{(01)^5} + q^2 t (q^3 t + q^4 t^6)
$$
and so
$$
P_{(01)^5} =  \frac{q^5 t^6}{1-q} (1 + q t).
$$
Now 
$$
P_{0^{10}}=qP_{0^{10}}+qP_{0^9 1},
$$
hence
\begin{align*}
&P_{0^{10}}&=&
\frac{1}{1-q}\left(
  (q^9+t q^{12})P_{(01)^5}
+q^{17}t^7
+ q^{16}(t^6+t^7)
+ q^{15}(t^5+t^6)
+ q^{14}(t^4 + t^5 +t^6) \ +
 \right.&
\\
& & &\left.
q^{13}(t^3+ t^4+ 2t^5)
+ q^{12}(t^2+t^3+2t^4)
+ q^{11}(t+t^2+t^3)
+q^{10}
\right)&
\\
&&=& 
\frac{1}{(1-q)^2}\left(
q^{18}t^8 - q^{18}t^7 + q^{17}t^7 - q^{17}t^6 + q^{16}t^7
- q^{16}t^5 + q^{15}t^7 - q^{15}t^4 + 2q^{14}t^6  \ +
\right.&
\\
& &&
\left.
2q^{14}t^6 - q^{14}t^3 - q^{14}t^5 + 2q^{13}t^5 - q^{13}t^4 - q^{13}t^2
+ 2q^{12}t^4 - q^{12}t \ + 
\right.&
\\
& &&
\left.
q^{11}t^3 + q^{11}t^2 + q^{11}t
- q^{11} + q^{10}\right)&
\end{align*}
 Observe the $q, t$-symmetry of 
\begin{align*}
 (1-q)c_{4,6}(q,t)&={t^8(1-q)^2}{q^{-10}} P_{0^{10}}(q, t^{-1})\\
  &= -q^8t  - qt^8 + q^8+ t^8 - q^7t^2 - q^2t^7+ q^7t+ qt^7 
- q^6t^3 - q^3t^6+ q^6t  + qt^6 \ +
\\
&\ \ \ \ \ \  q^5t + qt^5 - q^5t^4 - q^4t^5
- q^4t^3 - q^3t^4 + 2q^4t^2 + 2q^2t^4 + 2q^3t^3.
\end{align*}
Also, as before, one gets a positive form:
\begin{equation*}
\begin{aligned}
\frac{P_{0^{10}}}{q^{10}}=\frac{1}{1-q}
\left(q^7t^7
\right.
+ &q^6t^6+q^6t^7
+ q^5t^5+q^5t^6
+ q^4t^4 + q^4t^5 +q^4t^6 \ +
\\
q^3t^3&+ q^3t^4+ 2q^3t^5
\left.
+ q^2t^2+q^2t^3+2q^2t^4
+ qt+qt^2+qt^3
+1 \right) \quad +\\
\frac{1}{(1-q)^2}(&q^4t^6+q^5t^7+q^7t^7+q^8t^8).
\end{aligned}
\end{equation*}

\end{example}

\begin{figure}
{\scalefont{0.5}

\begin{forest}
for tree={s sep= 4mm, l=0.5mm}
[{\scalefont{1.5}${\color{blue}0000000000}$}, name=root
[1
[1 [1  [1, name=4q6 ] [0, name=4q7t3] ]	 [0 [1, name=4q7t2] [0,name=4q8t4] ] ]
[0, name=22
[1, name=33 [1 [1 [1,name=6q5t] [0,name=6q6t4]]]
	[0, name=46 [1
		[1 [1 [1,name=8q4t3] [0,name=8q5t5]]]
		[0 [\color{mygreen}{\fbox{1}}, name=A [1 [1 [1, name=10q3t5] [0, name=10q4t6]]]
			[0 [\color{blue}{\fbox{1}}, name=P015 [1 [1, name=11red,edge=red,
 edge label={node[midway,right, red]{$t$}} 
				[1, name=12q3t5] [0,name=12q4t6]]]
				[0,name=P105]]]]
		 ]]] ]  
   [0, name=C
[1, name=47 [1 [1, name=6q6t2] [0, name=6q7t4]]]
[0, name=48 [1 [1 [1 [1,name=8q5t3] [0, name=8q6t5] ]]
	    	[0, name=68 [1 [1 [1 [1, name=10q4t4] [0, name=10q5t6]]]
				[0, name=88 [1
[1 [1, name=112red,edge=red, edge label={node[midway,right, red]{$t^2$}} 
	[1, name=12q4t3] [0,name=12q5t5]]]
[0 [\color{mygreen}{\fbox{1}}, name=A2 ,edge=red, edge label={node[midway,right, red]{$t$}} 
		[1 [1 [1, name=14q3t5] [0, name=14q4t6]]]
			[0 [\color{blue}{\fbox{1}}, name=P0152 [1 [1, name=15red,edge=red,
 edge label={node[midway,right, red]{t}} 
				[1, name=16q3t5] [0,name=16q4t6]]]
				[0,name=P1052]]]] ]
]] ]
]]]]]]
] 
\node at (A) [left] { ${\color{mygreen} A=}$\; };
\node at (A2) [left]  { ${\color{mygreen} A=}$\; };
{\scalefont{1.4}
\draw[->, red] (P105) .. controls +(south east:1cm) and +(east:2cm) .. 
node[near start,right, red]{$\;1$} (P015);
\draw[->, red] (P1052) .. controls +(south east:1cm) and +(east:2cm) .. 
node[near start,right, red]{$\;1$} (P0152);
\draw[->] (root) .. controls +(south east:2cm) and +(east:2cm) .. 
node[near start,right]{\quad\; $q$} (root);
\node at (P015) [left] { ${\color{blue} (01)^5=}$\;};
\node at (P0152) [left] { ${\color{blue} (01)^5=}$\;};
\node at (4q6) [below] {\, $q^6$};
\node at (4q7t3) [below] {\, $q^7 t^3$};
\node at (4q7t2) [below] {\, $q^7 t^2$};
\node at (4q8t4) [below] {\, $q^8 t^4$};
\node at (6q5t) [below] {\, $q^5 t$};
\node at (6q6t4) [below] {\, $q^6 t^4$};
\node at (6q6t2) [below] {\, $q^6 t^2$};
\node at (6q7t4) [below] {\, $q^7 t^4$};
\node at (8q4t3) [below] {\, $q^4 t^3$};
\node at (8q5t5) [below] {\, $q^5 t^5$};
\node at (8q5t3) [below] {\, $q^5 t^3$};
\node at (8q6t5) [below] {\, $q^6 t^5$};
\node at (10q3t5) [below] {\, $q^3 t^5$};
\node at (10q4t6) [below] {\, $q^4 t^6$};
\node at (10q4t4) [below] {\, $q^4 t^4$};
\node at (10q5t6) [below] {\, $q^5 t^6$};
\node at (12q3t5) [below] {\, $q^3 t^5$};
\node at (12q4t6) [below] {\, $q^4 t^6$};
\node at (12q4t3) [below] {\, $q^4 t^3$};
\node at (12q5t5) [below] {\, $q^5 t^5$};
\node at (14q3t5) [below] {\, $q^3 t^5$};
\node at (14q4t6) [below] {\, $q^4 t^6$};
\node at (16q3t5) [below] {\, $q^3 t^5$};
\node at (16q4t6) [below] {\, $q^4 t^6$};
} 
\end{forest}


} 
\caption{Compact decision tree for $(M,N)=(4,6).$}
\label{fig: 46}
\end{figure}
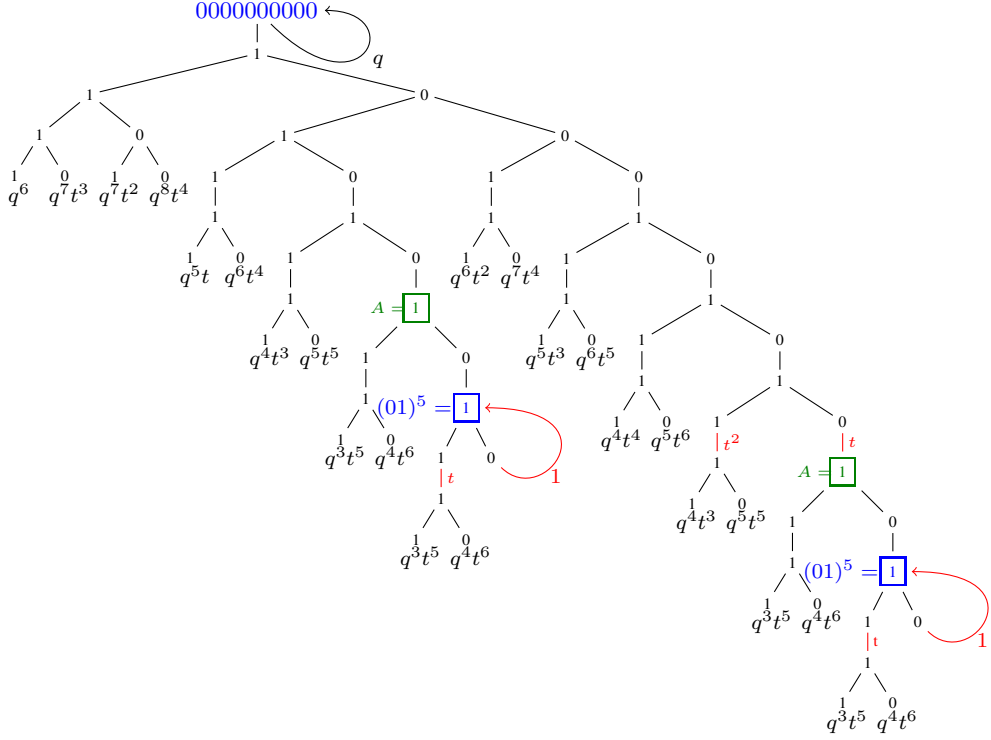

\section{Comparison with the work of Hogancamp and Mellit.}
\label{sec:comparison}

\subsection{Hogancamp-Mellit recursion}

Our next goal is match this recursion with the $a=0$ specialization of the following recursion due to Hogancamp and Mellit \cite{HogMellit}. 

\begin{definition}[\cite{HogMellit}]
\label{def:R}
The power series $R_{\bx,\by}(q,t,a)$ in variables $q,t$ and $a$ depend on a pair of words $\bx$ and $\by$ in the alphabet $\{0,\times\}.$ These power series satisfy the following recursive relations:
\begin{equation*}
\begin{aligned}
&R_{0{\bx},0{\by}}=t^{-|{\bx}|}R_{{\bx}\times,{\by}\times}+qt^{-|{\bx}|}R_{{\bx}0,{\by}0},\\
&R_{\times{\bx},0{\by}}=R_{{\bx}\times,{\by}},\\
&R_{0{\bx},\times{\by}}=R_{{\bx},{\by}\times},\\
&R_{\times{\bx},\times{\by}}=(t^{|{\bx}|}+a)R_{{\bx},{\by}},\\
&R_{\emptyset,\emptyset}=1,
\end{aligned} 
\end{equation*}
where $|{\bx}|$ denotes the number of $\times$'s in ${\bx}.$
\end{definition}

\begin{remark}
\label{rem:reverse}
Our recursion differs from the one in \cite{HogMellit} by reversing the order in both sequences $\bx,\by$. 
\end{remark}

In order to do so, we will need to go through certain reformulations and also adjust both the $\area$ and $\codinv$ statistics. First, we will need to replace the binary sequence $\bu$ of length $N+M$ by two sequences $(\bv,\bw)$ in the alphabet $\{0,\bullet,\times\}$ of lengths $M$ and $N$ respectively. Sequence $\bv$ records gaps (encoded by $0$), $N$-generators (encoded by $\times$), and the rest of the elements of $\Delta$ (encoded by $\bullet$) on the interval $[N,N+M-1].$ Similarly, sequence $\bw$ records gaps, $M$-generators, and the rest of the elements of $\Delta$ on the interval $[M,N+M-1].$ More formally, one gets the following definition:

\begin{definition}\label{Definition: v,w from u}
Let ${\bu}=(u_0,\ldots,u_{N+M-1})\in \{0,1\}^{N+M}$ be an admissible binary sequence. Define ${\bv}=(v_0,\ldots,v_{M-1})\in \{0,\bullet,\times\}^M$ as follows:
\begin{enumerate}
\item $v_i=0$ whenever $u_{N+i}=0,$
\item $v_i=\bullet$ whenever $u_{N+i}=u_i=1,$ and
\item $v_i=\times$ whenever $u_{N+i}=1$ and $u_i=0.$
\end{enumerate}
Similarly, define the sequence ${\bw}=(w_0,\ldots,w_{N-1})\in \{0,\bullet,\times\}^N$ as:
\begin{enumerate}
\item $w_i=0$ whenever $u_{M+i}=0,$
\item $w_i=\bullet$ whenever $u_{M+i}=u_i=1,$ and
\item $w_i=\times$ whenever $u_{M+i}=1$ and $u_i=0.$
\end{enumerate}
We say that a pair of sequences $(\bv,\bw)$ is
admissible if $\bv$ and $\bw$ are obtained from an admissible binary
sequence $\bu$ according to the rule above.
\end{definition} 
Clearly, the pair of sequences $\bv,\bw$ fully determine the binary
sequence $\bu.$
In other words, the above Definition \ref{Definition: v,w from u}
 describes a map
$${\mathbf{b}}:\{0,1\}^{M+N} 
\to \{0,\bullet,\times\}^M \times \{0,\bullet,\times\}^N
$$
and it is injective when restricted to the domain of
admissible sequences. Indeed, 
$$
\bu_i=\begin{cases}
1\ \text{if}\ i\in [0,M-1]\ \text{and}\ \bv_i=\bullet,\\
1\ \text{if}\ i\in [M,M+N-1]\ \text{and}\ \bw_{i-M}\in \{\times,\bullet\},\\
0\ \text{otherwise}.\\
\end{cases}
$$


\begin{remark}
\label{rem: lambda via v and w}
If $(\bv,\bw)=\mathbf{b}(\bu)$ then by Remark \ref{rem: lambda} we have $|\bv|=|\bw| = \lambda(\bu)$.
Here, as above, $|\bv|$ denotes the number of $\times$ entries in $\bv$.
\end{remark}
\begin{definition}\label{Definition: Iv,w Pv,w}
By slightly abusing notation, we set
\begin{equation*}
I_{{\bv},{\bw}}:=I_{\bu},
\end{equation*}
and
\begin{equation*}
P_{{\bv},{\bw}}(q,t):=P_{\bu}(q,t),
\end{equation*}
\end{definition} 


Now we can reformulate the recursion from Theorem \ref{Theorem: recursion binary N+M} using the new notation.

\begin{proposition}\label{Theorem: recursion P_v,w}
The recursion \eqref{eq-recursion binary N+M} is equivalent to the following recursion:
\begin{equation*}
\begin{aligned}
&P_{0{\bv},0{\bw}}=q(P_{{\bv}\times,{\bw}\times}+P_{{\bv}0,{\bw}0}),\\
&P_{\times{\bv},0{\bw}}=qP_{{\bv}\times,{\bw}\bullet},\\
&P_{0{\bv},\times{\bw}}=qP_{{\bv}\bullet,{\bw}\times},\\
&P_{\times{\bv},\times{\bw}}=qP_{{\bv}\bullet,{\bw}\bullet},\\
&P_{\bullet{\bv},\bullet{\bw}}=t^{|{\bv}|}P_{{\bv}\bullet,{\bw}\bullet}.\\
\end{aligned} 
\end{equation*}
\end{proposition}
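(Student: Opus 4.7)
The plan is to verify the proposition as a direct case-by-case translation of Theorem \ref{Theorem: recursion binary N+M} through the bijection $\mathbf{b}: \bu \mapsto (\bv, \bw)$ of Definition \ref{Definition: v,w from u}. The five equations of the proposition correspond exactly to the five admissible possibilities for the pair of first letters $(\bv_0, \bw_0)$, which refine the three cases of the original recursion by splitting the middle case $u_0 = 0,\ u_N + u_M > 0$ into the three subcases $(u_N, u_M) \in \{(1,0), (0,1), (1,1)\}$.

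First I would set up the dictionary for first and last letters. By Definition \ref{Definition: v,w from u}, $\bv_0$ depends only on $(u_0, u_N)$ and $\bw_0$ only on $(u_0, u_M)$. Admissibility of $\bu$ forces $u_0 = 1 \Rightarrow u_N = u_M = 1$, so every pairing with a $\bullet$ on one side but not the other is impossible; the remaining pairs $(0,0), (\times, 0), (0, \times), (\times, \times), (\bullet, \bullet)$ are in bijection with the admissible triples $(u_0, u_N, u_M)$ and exactly match the left-hand sides of the proposition. Next I would track how the shift $\bu \mapsto \bu' = (u_1, \ldots, u_{N+M-1}, *)$ propagates through $\mathbf{b}$. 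For $0 \le i < M-1$, the entry $v'_i$ is computed from $u'_{N+i} = u_{N+i+1}$ and $u'_i = u_{i+1}$, which is exactly the recipe for $v_{i+1}$; hence $\bv'$ is $\bv$ with its first letter deleted and a new last letter $v'_{M-1}$ appended, the latter determined by $u_M$ (the discarded prefix of $\bw$) and the bit $*$ via the same dictionary. The analogous statement holds for $\bw'$ with $M$ and $N$ swapped.

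Reading off each case then reduces the verification to mechanical substitution. Case $(0,0)$ is original Case 1 ($u_0 = u_N = u_M = 0$); property (b) of the proof of Theorem \ref{Theorem: recursion binary N+M} allows both $* = 0$ and $* = 1$, yielding new last letters $(0,0)$ and $(\times, \times)$ respectively, and the factor $q$ comes from property (a). Each of Cases $(\times, 0), (0, \times), (\times, \times)$ is an instance of original Case 2: property (b) forces $* = 1$, producing a single term whose new last letters are read off the dictionary. Case $(\bullet, \bullet)$ is original Case 3; $* = 1$ is forced, both new last letters are $\bullet$, and the coefficient $t^{\lambda(\bu)}$ equals $t^{|\bv|}$ by Remark \ref{rem: lambda via v and w} (the leading $\bullet$ does not contribute to $|\bv|$). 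The only real bookkeeping obstacle---and essentially the only content beyond the dictionary---is keeping straight for which values of $*$ the successor is admissible, since this controls whether the right-hand side has one term or two.
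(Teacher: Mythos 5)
Your proposal is correct; since the paper states this proposition as an immediate reformulation (``Now we can reformulate the recursion\ldots{} using the new notation'') without giving a proof, your write-up simply supplies the routine dictionary verification the paper leaves implicit, and it matches what the paper intends. One small point of precision: for the excluded pairs $(\times,\bullet)$ and $(\bullet,\times)$, the obstruction is already a contradiction on $u_0$ from Definition~\ref{Definition: v,w from u} itself, not admissibility of $\bu$ (which is only needed to exclude $(\bullet,0)$ and $(0,\bullet)$); this does not affect the conclusion.
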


This recursion looks very similar to the $a=0$ version of the 
Hogancamp-Mellit \cite{HogMellit} recursion, but not exactly the same. In order to get an exact match, let us make the following adjustments to the statistics:

\begin{definition}
Define
statistics $\area'$ and $\codinv'$ on the set $\IMN$ of $(M,N)$-invariant subsets in $\BZ_{\ge 0}$ by 
\begin{equation*}
\area'(\Delta)=\sharp (\overline{\Delta}\cap\BZ_{\ge N+M}),
\end{equation*}
and
\begin{equation*}
\begin{aligned}
\codinv'(\Delta)=\sum\limits_{\a\in\Ngen(\Delta)}&\sharp\left([\a,\a+M-1]\cap \overline{\Delta}\cap \BZ_{\ge N+M}\right)
-\frac{\lambda(\Delta)(\lambda(\Delta)-1)}{2}\\
\end{aligned}
\end{equation*}
where as in Definition \ref{def:lambda}
above $\lambda(\Delta)=\sharp(\Ngen(\Delta)\cap [N,N+M-1]).$ 
\end{definition}






\begin{definition}
As before, let ${\bu}=(u_0,\ldots,u_{N+M-1})\in\{0,1\}^{N+M}$ be an admissible binary sequence. The generating series $Q_{\bu}(q,t)$ is defined by
\begin{equation*}
Q_{\bu}(q,t):=\sum\limits_{\Delta\in I_{\bu}} t^{-\codinv'(\Delta)}q^{\area'(\Delta)}.
\end{equation*}
We also set
\begin{equation*}
Q_{{\bv},{\bw}}(q,t):=Q_{\bu}(q,t),
\end{equation*}
where the sequences ${\bv}=(v_0,\ldots,v_{M-1})\in\{0,\bullet,\times\}^M$ and ${\bw}=(w_0,\ldots,w_{N-1})\in\{0,\bullet,\times\}^N$ are determined in the same way as in Definition \ref{Definition: v,w from u},
i.e. $(\bv,\bw) = \mathbf{b}(\bu)$.
\end{definition}

Note that for any $\Delta\in I_{\zero}$ one gets $\area'(\Delta)
=
-N-M + \area(\Delta)$ and $\codinv'(\Delta)=\codinv(\Delta).$ Therefore,
\begin{equation}\label{Equation: Q vs P}
Q_{\zeroMN}(q,t)=q^{-N-M}P_{\zeroMN}(q,t^{-1}).
\end{equation}

\begin{remark}
\label{rem: comparison 1s in the end}
More generally, let $\Delta\in I_{0^{M+N-k}1^k}$ for $k\le \min(M,N)$. Then 
$$
\area'(\Delta)=-N-M+k+\area(\Delta)\ \text{and}\ \codinv'(\Delta)=\codinv(\Delta)-\frac{k(k-1)}{2}.
$$
and
\begin{equation}
\label{eq: Q vs P with ones at the end}
Q_{0^{M-k}\times^k,0^{N-k}\times^k}(q,t)=q^{-N-M+k}t^{k(k-1)/2}P_{0^{M+N-k}1^k}(q,t^{-1}).
\end{equation}
\end{remark}

\begin{theorem}\label{Theorem: recursion Q_v,w}
The following recursion holds:
\begin{equation*}
\begin{aligned}
&Q_{0{\bv},0{\bw}}=t^{-|{\bv}|}Q_{{\bv}\times,{\bw}\times}+qt^{-|{\bv}|}Q_{{\bv}0,{\bw}0},\\
&Q_{\times{\bv},0{\bw}}=Q_{{\bv}\times,{\bw}\bullet},\\
&Q_{0{\bv},\times{\bw}}=Q_{{\bv}\bullet,{\bw}\times},\\
&Q_{\times{\bv},\times{\bw}}=t^{|{\bv}|}Q_{{\bv}\bullet,{\bw}\bullet},\\
&Q_{\bullet{\bv},\bullet{\bw}}=Q_{{\bv}\bullet,{\bw}\bullet},\\
\end{aligned} 
\end{equation*}
\end{theorem}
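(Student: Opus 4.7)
The plan is to derive the $Q$-recursion from the $P$-recursion of Proposition \ref{Theorem: recursion P_v,w} by an explicit change of variables. The key observation is that for $\Delta \in I_{\bu}$, both differences $\area(\Delta) - \area'(\Delta)$ and $\codinv(\Delta) - \codinv'(\Delta)$ depend only on $\bu$, not on the particular choice of $\Delta$.

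First I would verify that $\area(\Delta) - \area'(\Delta) = \sharp(\overline{\Delta} \cap [0, N+M-1])$ equals the number of zeros $z(\bu)$ in $\bu$. Likewise, in the sum defining $\codinv(\Delta)$ only the $N$-generators $\a$ with $\a < N+M$ contribute anything to $\codinv(\Delta) - \codinv'(\Delta)$, since for $\a \ge N+M$ the interval $[\a, \a+M-1]$ lies entirely in $\Z_{\ge N+M}$. The set of $N$-generators in $[0, N+M-1]$ is determined by $\bu$ (namely, the positions $\a$ with $u_{\a} = 1$ and either $\a < N$ or $u_{\a - N} = 0$), and so are the gaps in $[0, N+M-1]$; combined with the constant $\lambda(\bu)(\lambda(\bu)-1)/2$ correction, this yields $\codinv(\Delta) - \codinv'(\Delta) = h(\bu)$ for some function $h$ of $\bu$ alone. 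Consequently
$$
Q_{\bu}(q, t) \;=\; q^{-z(\bu)}\, t^{h(\bu)}\, P_{\bu}(q, t^{-1}).
$$

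I would then substitute this formula into each of the five cases of Proposition \ref{Theorem: recursion P_v,w} and, after replacing $t$ by $t^{-1}$, compare the resulting coefficients. Each $P$-relation $P_{\bu} = \sum_j c_j P_{\bu_j}$ transforms into
$$
Q_{\bu}(q, t) \;=\; \sum_j c_j\vert_{t \mapsto t^{-1}} \cdot q^{z(\bu_j) - z(\bu)} \cdot t^{-(h(\bu_j) - h(\bu))} \cdot Q_{\bu_j}(q, t).
$$
The differences $z(\bu_j) - z(\bu)$ are immediate from inspection (adding a terminal $\times$ drops one zero while removing a leading zero, and so on). Matching $t$-exponents then reduces to computing $h(\bu_j) - h(\bu)$ in each of the five cases.

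The main obstacle is this bookkeeping for $h$: one must track which $N$-generators cross the boundary of $[0, N+M-1]$ between $\bu$ and $\bu_j$, and how the value $\lambda(\bu)$ shifts, since both contribute to $h$. In particular, in case 5 ($\bullet\bv, \bullet\bw \to \bv\bullet, \bw\bullet$), where the target $Q$-relation has no $t$-factor, the factor $t^{\lambda(\bu)}$ of the $P$-recursion must be exactly absorbed by changes in $h$ arising from the departure of the $N$-generator at position $0$ and the corresponding shift in the $\lambda(\lambda-1)/2$ term. Analogous but simpler cancellations occur in cases 2, 3, and 4.
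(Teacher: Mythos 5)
Your approach is correct but takes a genuinely different route from the paper. The paper proves the $Q$-recursion directly, by analyzing how $\area'$ and $\codinv'$ change under the shift map $\rho$ of Definition~\ref{def:rho}; it never passes through the $P$-recursion of Proposition~\ref{Theorem: recursion P_v,w}. You instead observe that $\area - \area'$ and $\codinv - \codinv'$ are constant on $I_{\bu}$, giving a clean transformation $Q_{\bu}(q,t) = q^{-z(\bu)}\,t^{h(\bu)}\,P_{\bu}(q,t^{-1})$, and then propose to transport the already-proved $P$-recursion through this change of variables. This is a sound strategy, and I checked that the $h(\bu)$-bookkeeping works out in cases 1, 2, and 5. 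One nice feature of your route is that your identity generalizes the paper's Equation~(\ref{Equation: Q vs P}) and Remark~\ref{rem: comparison 1s in the end}, which establish the relation $Q_\bu = q^{-z(\bu)}t^{h(\bu)}P_\bu(q,t^{-1})$ only for $\bu$ of the special form $0^{M+N-k}1^k$. On the other hand, the paper's direct route is arguably less delicate: it tracks only the \emph{change} in $\codinv'$ under $\rho$, whereas you must compute the absolute correction $h(\bu)$ and then differences of it, which is a roughly equivalent amount of work. One small misstatement in your final paragraph: in case 5, $\lambda(\bullet\bv,\bullet\bw) = |\bv| = \lambda(\bv\bullet,\bw\bullet)$, so the $\lambda(\lambda-1)/2$ correction does \emph{not} shift; the entire factor $t^{\lambda(\bu)}$ is accounted for by the replacement of the $N$-generator $0$ by $N-1$ in the first summand of $h$. (The $\lambda(\lambda-1)/2$ term does shift in case 1, where it contributes the $t^{-|\bv|}$ factor.) This does not affect the validity of the framework, but you should get the attribution right when you carry out the computation.
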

\begin{proof}
Let $\Delta\in \IMN$ be an invariant subset.
Similar to the proof of Theorem \ref{Theorem: recursion binary N+M}, we consider the shift map $\rho.$

For the $\area'$ statistic we get that if $N+M\notin\Delta,$ then 
\begin{equation*}
\area'(\rho(\Delta))=\area'(\Delta)-1,
\end{equation*}
while if $N+M\in\Delta,$ then 
\begin{equation*}
\area'(\rho(\Delta))=\area'(\Delta).
\end{equation*}

One has to consider the two summands of the $\codinv'$ statistic separately. The first summand is given by 
\begin{equation}
\label{eq: first summand}
\sum\limits_{\a\in\Ngen(\Delta)}\sharp([\a,\a+M-1]\cap \overline{\Delta}\cap\BZ_{\ge N+M}).
\end{equation}
Clearly, it is enough to sum over the $N$-generators that are bigger than $N,$ because otherwise the interval $[\a,\a+M-1]$ does not intersect $\BZ_{\ge N+M}.$ We can thus rewrite \eqref{eq: first summand} as
\begin{equation*}
\sum\limits_{\a\in\Ngen(\Delta)\cap\BZ_{>N}}\sharp([\a,\a+M-1]\cap \overline{\Delta}\cap\BZ_{\ge N+M}).
\end{equation*}
The $N$-generators of $\rho(\Delta)$ agree with those of $\Delta$
shifted down by $1$, except in the case that $0 \in \NgenD$, 
in which case it is replaced by $\Ngen(\rho(\Delta)) \ni N-1 < N$; so regardless
$\Ngen(\rho(\Delta))\cap\BZ_{>N}=(\Ngen(\Delta)\cap\BZ_{>N+1})-1$ 
holds.
Hence 
\begin{equation*}
\begin{aligned}
\sum\limits_{\a\in\Ngen(\rho(\Delta))}&\sharp([\a,\a+M-1]\cap \overline{\rho(\Delta)}\cap\BZ_{\ge N+M})\\
=&\sum\limits_{\a\in\Ngen(\Delta)}\sharp([\a,\a+M-1]\cap \overline{\Delta}\cap\BZ_{\ge N+M+1}).
\end{aligned}
\end{equation*} 
Therefore if $N+M\in\Delta$ then \eqref{eq: first summand} does not change. If $N+M\notin\Delta$ then
\begin{equation*}
\begin{aligned}
\sum\limits_{\a\in\Ngen(\rho(\Delta))}\sharp([\a,&\a+M-1]\cap \overline{\rho(\Delta)}\cap\BZ_{\ge N+M})\\
=\sum\limits_{\a\in\Ngen(\Delta)}&\sharp([\a,\a+M-1]\cap \overline{\Delta}\cap\BZ_{\ge N+M+1})\\
=\sum\limits_{\a\in\Ngen(\Delta)}&\sharp([\a,\a+M-1]\cap \overline{\Delta}\cap\BZ_{\ge N+M})-\sum\limits_{\a\in\Ngen(\Delta)}&\sharp([\a,\a+M-1]\cap\{N+M\})\\
=\sum\limits_{\a\in\Ngen(\Delta)}&\sharp([\a,\a+M-1]\cap \overline{\Delta}\cap\BZ_{\ge N+M})-\lambda(\Delta)\\
\end{aligned}
\end{equation*} 
since $N \not\in \Ngen(\Delta)$ in this case.
Recall that if $\Delta\in I_{\bv,\bw},$ then by Remark \ref{rem: lambda via v and w} one has $\lambda(\Delta)=|\bv|=|\bw|$.
Combining the above in comparing $\Delta$ to $\rho(\Delta)$, we get:
\begin{enumerate}
\item If $\Delta\in I_{0\bv,0\bw},$ then $0,N,M\notin\Delta.$ Then we can either have $N+M\in\Delta$ or $N+M\notin \Delta.$
If $N+M\in\Delta,$ then $\rho(\Delta)\in I_{\bv \times, \bw \times}$ and
 $\area'$ and \eqref{eq: first summand} do not change, but $\lambda(\Delta)$ increases by 1, i.e., $\lambda(\rho(\Delta))=\lambda(\Delta)+1$, which decreases the second summand of the $\codinv'$ statistic by
$\lambda(\Delta)=|\bv|$.
If $N+M\notin\Delta,$ then $\rho(\Delta)\in I_{\bv 0, \bw 0}$ and   
\begin{equation*}
\area'(\rho(\Delta))=\area'(\Delta)-1;
\end{equation*}
furthermore \eqref{eq: first summand} decreases by $\lambda(\Delta)=|\bv|$,
and $\lambda(\Delta) = \lambda(\rho(\Delta))$ does not change. 
All combined, we get the first relation:
\begin{equation*}
Q_{0{\bv},0{\bw}}=t^{-|{\bv}|}Q_{{\bv}\times,{\bw}\times}+qt^{-|{\bv}|}Q_{{\bv}0,{\bw}0}.
\end{equation*}

\item If $\Delta\in I_{\times\bv,0\bw},$ then $0,M\notin\Delta,$ but $N\in\Ngen(\Delta).$ Then also $N+M\in\Delta$
with $N+M \in \Ngen(\Delta)$ but $N+M \not\in \Mgen(\Delta)$ so that
$\rho(\Delta) \in I_{\bv \times, \bw \bullet}$.
 Furthermore $\area'$ and \eqref{eq: first summand} do not change, 
and $\lambda(\Delta)=|\times\bv|=|\bv\times| =\lambda(\rho(\Delta))$
does not change either.

The case of $\Delta\in I_{0\bv,\times\bw}$ is analogous to the above. We get

\begin{equation*}
\begin{aligned}
&Q_{\times{\bv},0{\bw}}=Q_{{\bv}\times,{\bw}\bullet},\\
&Q_{0{\bv},\times{\bw}}=Q_{{\bv}\bullet,{\bw}\times}.
\end{aligned} 
\end{equation*}

\item If $\Delta\in I_{\times\bv,\times\bw},$ then $0\notin\Delta,$ but $N\in\Ngen(\Delta)$ and $M\in\Mgen(\Delta).$ Then also $N+M\in\Delta,$ therefore $\area'$ and \eqref{eq: first summand} do not change, and $\lambda(\Delta)$ decreases by 1.
Since $N+M$ is neither an $N$-generator nor $M$-generator,
$\rho(\Delta) \in I_{{\bv}\bullet,{\bw}\bullet}$.  We get
\begin{equation*}
Q_{\times{\bv},\times{\bw}}=t^{|{\bv}|}Q_{{\bv}\bullet,{\bw}\bullet}.
\end{equation*}

\item Finally, if $\Delta\in I_{\bullet\bv,\bullet\bw},$ then $0,N,M\in\Delta,$ and also $N+M\in\Delta,$ but $N+M \notin \NgenD$ and $N+M \notin \MgenD$. 
 Therefore, no statistics change in this case and we get

\begin{equation*}
Q_{\bullet{\bv},\bullet{\bw}}=Q_{{\bv}\bullet,{\bw}\bullet}.
\end{equation*}
\end{enumerate}
\end{proof}

The final observation is that in the recursion for $Q$ one can completely ignore all the $\bullet$'s, stated more precisely in Theorem
\ref{thm:ignore bullets}.
This motivates the definition of the following partial order
that we will use in the proof of our next result.

\begin{definition}
Let $(\bv,\bw)$ and $(\bv',\bw')$ be two admissible pairs of sequences. We say that $(\bv',\bw')\prec(\bv,\bw)$ if and only if either the total number of $\bullet$'s in the sequences $\bv'$ and $\bw'$ is greater than the total number of $\bullet$'s in the sequences $\bv$ and $\bw,$ or those numbers are the same, but then $|\bv'|>|\bv|,$ i.e. the number of $\times$ in $\bv'$ is greater then the number of $\times$'s in $\bv.$
\end{definition}

Note that $(\bullet^M, \bullet^N)$ is minimal with respect to $\prec$.
Further if $(\bv,\bw)$ and $(\bv',\bw')$ have the same total
number of $\bullet$'s and $\times$'s, but $(\bv,\bw) \neq (\bv',\bw')$
then they are incomparable. 

\begin{definition}
\label{def:phi}
Let $\phi$ be the map from the words in the alphabet $\{0,\bullet,\times\}$ to the words in the alphabet $\{0,\times\}$ given by simply forgetting all $\bullet$s.
\end{definition}
\begin{theorem}
\label{thm:ignore bullets}
Let $\bx=\phi(\bv)$ and $\by=\phi(\bw).$ Then
\begin{equation*}
R_{\bx,\by}(q,t,0)=Q_{\bv,\bw}(q,t).
\end{equation*}
\end{theorem}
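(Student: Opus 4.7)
The plan is to verify that $\tilde R_{\bv,\bw}(q,t) := R_{\phi(\bv),\phi(\bw)}(q,t,0)$ satisfies the same recursion (from Theorem \ref{Theorem: recursion Q_v,w}) as $Q_{\bv,\bw}$ with the same base value, and then invoke uniqueness to conclude $\tilde R = Q$.

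To verify the recursion, I first record three elementary properties of the map $\phi$ from Definition \ref{def:phi}: (i) $\phi$ commutes with appending non-bullet letters, $\phi(\bv\star) = \phi(\bv)\star$ for $\star \in \{0,\times\}$; (ii) $\phi$ deletes $\bullet$'s from either end, $\phi(\bullet\bv) = \phi(\bv\bullet) = \phi(\bv)$; and (iii) $\phi$ preserves the cross count, $|\phi(\bv)| = |\bv|$. Combined with the defining recursion for $R$ from Definition \ref{def:R} specialized at $a=0$, each of the five identities in Theorem \ref{Theorem: recursion Q_v,w} becomes a direct substitution. The two slightly subtle matches are case 4, where the coefficient $(t^{|\phi(\bv)|} + a)|_{a=0} = t^{|\bv|}$ precisely produces the $t^{|\bv|}$ factor in the $Q$-recursion, and case 5, where $\tilde R_{\bullet\bv,\bullet\bw} = \tilde R_{\bv\bullet,\bw\bullet}$ holds tautologically since both sides equal $R_{\phi(\bv),\phi(\bw)}(q,t,0)$.

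It remains to argue uniqueness of the $Q$-recursion given the initial condition $Q_{\bullet^M,\bullet^N} = 1$. The base value agrees for both $Q$ and $\tilde R$: the pair $(\bullet^M,\bullet^N)$ corresponds via Definition \ref{Definition: v,w from u} to $\bu = 1^{M+N}$, whose only element of $I_{\bu}$ is $\Delta = \BZ_{\ge 0}$, for which all statistics vanish ($\area'(\Delta) = 0$, $\lambda(\Delta) = 0$, $\codinv'(\Delta) = 0$), giving $Q_{\bullet^M,\bullet^N} = 1$; meanwhile $\phi(\bullet^M) = \phi(\bullet^N) = \emptyset$ gives $\tilde R_{\bullet^M,\bullet^N} = R_{\emptyset,\emptyset}(q,t,0) = 1$. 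Uniqueness itself follows from an induction mirroring the proof of Theorem \ref{Theorem: recursion unique}: induct on the number $k$ of $0$'s in $(\bv,\bw)$. When $k = 0$, only cases 4 and 5 apply; case 4 strictly decreases the cross count while case 5 merely rotates bullets, and iterating reduces the pair to $(\bullet^M,\bullet^N)$ where the base value determines $Q$. When $k > 0$, cases 2, 3, and the first branch of case 1 strictly decrease $k$, while the second branch of case 1 preserves $k$ and rotates the pair; iterating either eventually lowers $k$ or cycles back to the starting pair, producing a linear equation $Q_{\bv,\bw} = \gamma Q_{\bv,\bw} + \sum_j \gamma_j Q_{(\text{lower})}$ with $\gamma \neq 1$ (by Lemma \ref{lemma-periodic}, $\gamma$ is a pure power of $q$, since in the cyclic situation the sequence is periodic and $\lambda = 0$), which uniquely determines $Q_{\bv,\bw}$. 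The main obstacle is faithfully translating the periodicity/cycle analysis of Theorem \ref{Theorem: recursion unique} through the correspondence $\mathbf{b}$ of Definition \ref{Definition: v,w from u}; once done, $Q = \tilde R$ follows.
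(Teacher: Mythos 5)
Your proposal is correct and lands close to the paper's proof, but with a cleaner organization: you explicitly split the argument into (a) verifying that $\tilde R_{\bv,\bw}:=R_{\phi(\bv),\phi(\bw)}(q,t,0)$ satisfies the $Q$-recursion of Theorem~\ref{Theorem: recursion Q_v,w}, and (b) a uniqueness statement for that recursion given $Q_{\bullet^M,\bullet^N}=1$. The paper instead runs a single induction on the order $\prec$ (more $\bullet$'s first, then more $\times$'s), proving $Q_{\bv,\bw}=R_{\phi(\bv),\phi(\bw)}(q,t,0)$ directly while quoting the matching $R$-relation at each step; your two-ingredient framing makes the same content easier to audit, and your mechanical verification of the five identities via the properties of $\phi$ (commutes with appending $0,\times$; deletes $\bullet$'s from either end; preserves $\times$-count) is exactly right. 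The one place where your write-up is less sharp than the paper's is the cycle step: you route through Lemma~\ref{lemma-periodic} and the correspondence $\mathbf b$, flagging the translation as the remaining obstacle, whereas the paper dispatches it directly by observing that along a cycle the fourth relation can never be used (it strictly decreases the $\times$-count), so the cyclic sequences contain no $\times$'s at all; hence all $t$-exponents $|\bv|$ along the cycle vanish, $\gamma$ is a positive power of $q$, and the linear equation solves. Adopting that direct observation would close the gap you flagged without any appeal to periodicity of $\bu$.
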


\begin{proof}
The proof is by induction with respect to the
order $\prec$.

The base is covered by the normalization conditions:
\begin{equation*}
R_{\emptyset,\emptyset}(q,t,0)=1=Q_{\bullet\ldots\bullet,\bullet\ldots\bullet}.
\end{equation*}
Consider a pair of admissible sequences $(\bv,\bw)$ and assume that for all pairs $(\bv',\bw')$ such that $(\bv',\bw')\prec(\bv,\bw)$  we have already proved that 
\begin{equation*}
R_{\phi(\bv'),\phi(\bw')}(q,t,0)=Q_{{\bv'},{\bw'}}(q,t).
\end{equation*}
Now we apply the recurrence relations to $Q_{\bv,\bw}.$ Note that all the recurrence relations
listed in Theorem \ref{Theorem: recursion Q_v,w},
except the first one and the last one increase the number of $\bullet$'s, in which case we are done by the inductive hypothesis and by using the corresponding recurrence relation for $R_{\bx,\by}(q,t,0).$ Therefore, the only cases left are when either both $\bv$ and $\bw$ start with $0$'s, and we are forced to use the first relation:
\begin{equation*}
Q_{0{\bv},0{\bw}}=t^{-|{\bv}|}Q_{{\bv}\times,{\bw}\times}+qt^{-|{\bv}|}Q_{{\bv}0,{\bw}0},
\end{equation*}
or they both start with $\bullet$'s, and we are forced to use the last relation:
\begin{equation*}
Q_{\bullet{\bv},\bullet{\bw}}=Q_{{\bv}\bullet,{\bw}\bullet}.
\end{equation*}
Note that in the case of the first relation our only concern is the second summand, as for the first summand we have
$({\bv}\times,{\bw}\times)\prec(0\bv,0\bw).$
In either case, we keep applying the recurrence relations to the terms not covered by the inductive assumption until either no such terms 
are 
left, and we are done by the inductive assumption and the corresponding recurrence relations for $R_{\bx,\by}(q,t,0),$ or we get into a cycle and get back the same term $Q_{\bv,\bw}.$ In this case we get

\begin{equation*}
Q_{\bv,\bw}=\gamma Q_{\bv,\bw}+\sum\limits_{(\bv',\bw')\prec(\bv,\bw)} \gamma_{\bv',\bw'}Q_{\bv',\bw'},
\end{equation*}
where $\gamma$ and all $\gamma_{\bv',\bw'}$'s are monomials in $q$ and $t$ (or $0$s). Note that in this case both sequences $\bv$ and $\bw$ don't contain any $\times$'s, and neither do the intermediate pairs of sequences along the cycle, as those pairs are simply cyclic shifts of $(\bv,\bw).$ Also, since we are not in the base case of the induction, the period contains at least one $0,$ meaning that we used the first relation at least once. Therefore, the coefficient
$\gamma = q^k$
where $k$ is a positive integer. We can rewrite the above equation as
\begin{equation*}
Q_{\bv,\bw}=\frac{\sum\limits_{(\bv',\bw')\prec(\bv,\bw)} \gamma_{\bv',\bw'}Q_{\bv',\bw'}}{1-q^k}.
\end{equation*}
Now we are done by the inductive assumptions and the corresponding recurrence relations for $R_{\bx,\by}(q,t,0),$
 as in all previous cases.
\end{proof}

By Theorem \ref{thm:ignore bullets}, Lemma \ref{lem: zero and catalan} and Remark \ref{rem: comparison 1s in the end} we immediately get the following.

\begin{corollary}
\label{cor:R vs catalan}
We have
$$
R_{0^M,0^N}(q,t,0)=Q_{0^M,0^N}(q,t)=q^{-N-M}P_{0^{M+N}}(q,t^{-1})=\frac{t^{-\delta(N,M)}}{1-q}c_{M,N}(q,t).
$$
\end{corollary}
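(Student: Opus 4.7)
The plan is to chain together the three ingredients cited in the statement. The desired identity has three equalities, and each corresponds to one cited result.

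\textbf{Step 1 (first equality).} I would apply Theorem \ref{thm:ignore bullets} with $\bv=0^M$ and $\bw=0^N$. Since the map $\phi$ of Definition \ref{def:phi} only deletes the symbol $\bullet$, and the sequences $0^M$ and $0^N$ contain none, we have $\phi(0^M)=0^M$ and $\phi(0^N)=0^N$. Hence Theorem \ref{thm:ignore bullets} directly yields
$$
R_{0^M,0^N}(q,t,0)=Q_{0^M,0^N}(q,t).
$$

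\textbf{Step 2 (second equality).} This is exactly equation \eqref{Equation: Q vs P}, i.e.\ the $k=0$ case of Remark \ref{rem: comparison 1s in the end}: the admissible binary sequence $\bu=\zero$ of length $M+N$ corresponds under the map $\mathbf{b}$ of Definition \ref{Definition: v,w from u} to the pair $(\bv,\bw)=(0^M,0^N)$, so
$$
Q_{0^M,0^N}(q,t)=Q_{\zero}(q,t)=q^{-N-M}P_{\zero}(q,t^{-1})=q^{-N-M}P_{0^{M+N}}(q,t^{-1}).
$$

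\textbf{Step 3 (third equality).} Lemma \ref{lem: zero and catalan} states
$$
c_{M,N}(q,t)=q^{-N-M}t^{\delta(N,M)}(1-q)\,P_{\zero}(q,t^{-1}).
$$
Solving for $P_{\zero}(q,t^{-1})$ and multiplying by $q^{-N-M}$ gives
$$
q^{-N-M}P_{\zero}(q,t^{-1})=\frac{t^{-\delta(N,M)}}{1-q}\,c_{M,N}(q,t),
$$
which is the final equality.

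Since every step is a direct invocation of a previously established result, there is essentially no obstacle, the only bookkeeping being to match notations ($\zero$ vs.\ $(0^M,0^N)$ under $\mathbf{b}$, and $0^M,0^N$ being fixed by $\phi$). The three equalities then compose into the claimed chain.
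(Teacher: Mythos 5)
Your proof is correct and follows exactly the route the paper intends: the paper states that the corollary follows immediately from Theorem \ref{thm:ignore bullets}, Lemma \ref{lem: zero and catalan}, and Remark \ref{rem: comparison 1s in the end}, and your three steps invoke precisely these three results (with the correct observation that $\phi$ fixes $0^M,0^N$ and that $\mathbf{b}(\zero)=(0^M,0^N)$). Nothing is missing.
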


\begin{corollary}
\label{cor: R with ones}
For $k\le \min(M,N)$ we have
$$
R_{0^{M-k}\times^k,0^{N-k}\times^{k}}(q,t,0)=Q_{0^{M-k}\times^k,0^{N-k}\times^{k}}(q,t)=q^{-N-M+k}t^{k(k-1)/2}P_{0^{M+N-k}1^k}(q,t^{-1}).
$$
\end{corollary}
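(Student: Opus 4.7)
The plan is to assemble the corollary from Theorem \ref{thm:ignore bullets} together with equation \eqref{eq: Q vs P with ones at the end} of Remark \ref{rem: comparison 1s in the end}, after a brief bookkeeping identification of the input sequences.

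First I would compute $\mathbf{b}(0^{M+N-k}1^k)$ directly from Definition \ref{Definition: v,w from u}. Writing $\bu=(u_0,\ldots,u_{M+N-1})$ with $u_j=1$ iff $j\ge M+N-k$, the hypothesis $k\le\min(M,N)$ forces $M+N-k\ge\max(M,N)$. Hence $u_i=0$ for all $0\le i<M$, while $u_{N+i}=1$ iff $N+i\ge M+N-k$, i.e.\ iff $i\ge M-k$. According to Definition \ref{Definition: v,w from u} this gives $v_i=0$ for $i<M-k$ and $v_i=\times$ for $M-k\le i<M$, so $\bv=0^{M-k}\times^k$. A symmetric calculation, using $u_{M+i}=1$ iff $i\ge N-k$, yields $\bw=0^{N-k}\times^k$. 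In particular neither $\bv$ nor $\bw$ contains any $\bullet$.

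Next I would invoke Theorem \ref{thm:ignore bullets} with $(\bv,\bw)=(0^{M-k}\times^k,\,0^{N-k}\times^k)$. Since $\phi$ from Definition \ref{def:phi} only forgets $\bullet$'s, here it acts trivially, so $\phi(\bv)=\bv$ and $\phi(\bw)=\bw$. The theorem then gives
$$R_{0^{M-k}\times^k,\,0^{N-k}\times^k}(q,t,0)\;=\;Q_{0^{M-k}\times^k,\,0^{N-k}\times^k}(q,t),$$
which is the first claimed equality. The second equality is precisely equation \eqref{eq: Q vs P with ones at the end}, obtained in Remark \ref{rem: comparison 1s in the end} from the statistic shifts $\area'(\Delta)=\area(\Delta)-N-M+k$ and $\codinv'(\Delta)=\codinv(\Delta)-k(k-1)/2$ valid for all $\Delta\in I_{0^{M+N-k}1^k}$, together with the defining relation $Q_{\bv,\bw}(q,t)=Q_{\bu}(q,t)$ and the substitution $t\mapsto t^{-1}$ in the definition of $Q$.

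There is essentially no obstacle here: the analytical heart of the argument has already been absorbed into Theorem \ref{thm:ignore bullets}, and the statistic shifts in Remark \ref{rem: comparison 1s in the end} follow because elements of $I_{0^{M+N-k}1^k}$ are precisely shifts by $M+N-k$ of elements of $I_{0^{M+N}}$ together with a fixed block $[M+N-k,\,M+N-1]\subset\Delta$. If anything requires care, it is simply verifying the formula $\mathbf{b}(0^{M+N-k}1^k)=(0^{M-k}\times^k,\,0^{N-k}\times^k)$; once that is in place, the corollary is a one-line chain of the two cited identities.
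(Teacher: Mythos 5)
Your proposal is correct and follows exactly the route the paper takes: the paper simply says the corollary follows ``by Theorem \ref{thm:ignore bullets}, Lemma \ref{lem: zero and catalan} and Remark \ref{rem: comparison 1s in the end},'' and you have filled in the one non-trivial bookkeeping detail, namely the verification that $\mathbf{b}(0^{M+N-k}1^k) = (0^{M-k}\times^k,\,0^{N-k}\times^k)$, which is correct and uses $k\le\min(M,N)$ precisely where needed.

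One small caution about your closing aside: the claim that elements of $I_{0^{M+N-k}1^k}$ are ``precisely shifts by $M+N-k$ of elements of $I_{0^{M+N}}$ together with a fixed block'' is not accurate. For instance with $M=N=2$, $k=2$, the set $I_{0011}$ consists of the single subset $\{2,3,4,5,\dots\}$, which is not obtained by shifting an element of $I_{0000}$ by $2$ and adjoining $\{2,3\}$. This does not damage your proof, since you cite equation \eqref{eq: Q vs P with ones at the end} of Remark \ref{rem: comparison 1s in the end} directly rather than relying on this description; but if you wanted to actually prove that remark, you would instead compare $\area'$ versus $\area$ and $\codinv'$ versus $\codinv$ on the same $\Delta$ using the definitions, noting that the window $[0,M+N-1]$ contributes exactly $M+N-k$ gaps to $\area$ and that the $k$ $N$-generators in $[N,N+M-1]$ contribute $\binom{k}{2}$ to $\codinv$ from gaps inside $[0,M+N-1]$.
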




\subsection{Relation to Khovanov-Rozansky homology}
\label{sec:homology}

For the reader's convenience, in this section we give a short summary of the main results of \cite{HogMellit}
and provide a topological interpretation of the series $R_{\bx,\by}$ which appeared in their work. All further details
can be found in \cite{HogMellit}.

In \cite{KR2,KhSoer} Khovanov and Rozansky defined a new link invariant called HOMFLY-PT (or Khovanov-Rozansky) homology. To each link $L$ they associate a triply graded vector space $\mathcal{H}(L)=\oplus_{i,j,k}\mathcal{H}^{i,j,k}(L)$.
It is usually infinite dimensional, but all graded components $\mathcal{H}^{i,j,k}(L)$ are finite dimensional. The generating function for  their dimensions is usually called Poincar\'e series:
$$
\mathcal{P}(Q,T,A)=\sum_{i,j,k}Q^iA^jT^k\dim \mathcal{H}^{i,j,k}(L).
$$
The Euler characteristic of this homology 
$
\mathcal{P}(Q,-1,A)=\sum_{i,j,k}Q^iA^j(-1)^k\dim \mathcal{H}^{i,j,k}(L)
$
equals the HOMFLY-PT polynomial of $L$.
For various reasons it is useful to make a change of variables
$$
q=Q^2,\ t=T^2Q^{-2},\ a=AQ^{-2}.
$$
Recall that the $(M,N)$ torus link has $d=\gcd(M,N)$ components, each of which is a $(m,n)$ torus knot but they are linked nontrivially. For example, $(2,2)$ torus link is a pair of unknots linked the simplest possible way. 

\begin{theorem}(\cite{HogMellit})
The Poincar\'e series of the HOMFLY-PT homology of the $(M,N)$ torus link equals
$R_{0^M,0^N}(q,t,a)$.
\end{theorem}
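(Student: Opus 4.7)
The plan is to realize the HOMFLY-PT homology of the $(M,N)$ torus link via Soergel bimodules and Hochschild homology, and to match the combinatorial recursion defining $R_{\bx,\by}$ with an analogous recursion on the categorical side. Concretely, the $(M,N)$ torus link is the closure of the braid $\beta=(\sigma_1\sigma_2\cdots\sigma_{N-1})^M$, and its Khovanov-Rozansky homology is the triply graded Hochschild homology of the Rouquier complex $F(\beta)$ of Soergel bimodules. One introduces, for each pair $(\bx,\by)\in\{0,\times\}^M\times\{0,\times\}^N$, a refined complex $C_{\bx,\by}$ whose triply graded Hochschild homology has Poincar\'e series $R_{\bx,\by}(q,t,a)$: each letter $\times$ records a strand already absorbed into the Elias--Hogancamp categorified row projector, and each letter $0$ records a still-active strand.

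Next, for each of the five rules in Definition~\ref{def:R}, one constructs a distinguished triangle among the complexes $C_{\bx,\by}$ that descends to the stated recurrence on Poincar\'e series. The first rule arises from a Koszul-type resolution attached to a crossing on an active strand: the summand $t^{-|\bx|}R_{\bx\times,\by\times}$ corresponds to absorbing the new strand into the projector (with the $t^{-|\bx|}$ coming from commuting a projected strand through the rest of the diagram), while $qt^{-|\bx|}R_{\bx0,\by0}$ corresponds to leaving the strand active, with the extra $q$ produced by the crossing. The two middle rules follow from the idempotency/absorption properties of the projector. The fourth rule $(t^{|\bx|}+a)R_{\bx,\by}$ encodes the two-term Hochschild homology of an identity bimodule on a purely symmetrized strand, with even generator contributing $t^{|\bx|}$ and odd generator contributing $a$. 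The base case $R_{\emptyset,\emptyset}=1$ matches because the Hochschild homology of the identity bimodule on zero strands is one-dimensional.

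Once the categorified recursion is in place, the theorem follows by specializing to $\bx=0^M$, $\by=0^N$ and invoking uniqueness of the solution to the $R$-recursion with initial condition $R_{\emptyset,\emptyset}=1$, in the spirit of Theorem~\ref{Theorem: recursion unique}. The main obstacle is the categorical construction of the second step: producing the distinguished triangles that lift each recurrence and tracking the precise quantum, Hochschild, and $a$-grading shifts on the Soergel-bimodule side is the core technical input of the Hogancamp--Mellit work. It requires a delicate analysis of the recursive projector and its compatibility with Rouquier complexes, together with careful bookkeeping to account for the reversal convention noted in Remark~\ref{rem:reverse}.
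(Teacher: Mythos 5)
This theorem is quoted from \cite{HogMellit} without proof in the present paper --- the authors simply record it in Section~\ref{sec:homology} so that Corollaries~\ref{cor:R vs catalan} and~\ref{cor: R with ones} can be used to give a combinatorial interpretation of the link homology. There is therefore no internal proof in the paper against which to compare your argument.

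As a summary of what Hogancamp and Mellit actually do, your sketch is a reasonable high-level account of their strategy: identify the HOMFLY-PT homology with the Hochschild homology of a Rouquier complex built from the torus braid, introduce interpolating complexes indexed by pairs of $\{0,\times\}$-words that record which strands have been absorbed into a categorified symmetrizer, and produce distinguished triangles lifting each of the five rules of Definition~\ref{def:R}, with grading shifts bookkept so that passing to Poincar\'e series recovers exactly the $R$-recursion; the result then follows from uniqueness of the solution. Two caveats worth noting. First, you cite ``Theorem~\ref{Theorem: recursion unique}'' for uniqueness, but that theorem concerns the $P_{\bu}$ recursion; uniqueness for the $R$-recursion should instead be extracted from the argument in Section~\ref{sec-denoms}, where the partial order $\prec$ on pairs $(\bx,\by)$ shows that every branch decreases except for the self-loop on $(0\bx,0\by)\to(\bx0,\by0)$, which contributes a $1/(1-q)$ factor. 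Second, and more substantively, the phrase ``one constructs a distinguished triangle'' compresses the entire technical content of \cite{HogMellit} into a single clause; as written the proposal is a plan rather than a proof, and you rightly flag that the construction of the recursive projector and the tracking of $q,t,a$-gradings is the core input that remains to be supplied.
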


\begin{theorem}(\cite{HogMellit})
\label{thm:knots}
Let $M=dm, N=dn$ where $\gcd(m,n)=1$ and $m,n,d \in \ZZ$.
Consider the $(m,n)$ torus knot colored by the representation $\mathrm{Sym}^d$. Then the Poincar\'e series of the corresponding colored homology equals $$\prod_{i=1}^{d}\frac{1}{1-qt^{i-d}}R_{0^{M-d}1^d,0^{N-d}1^d}(q,t,a).$$
\end{theorem}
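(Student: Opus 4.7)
The plan is to derive this colored knot formula from the preceding torus link theorem via the standard cabling construction, combined with the categorified symmetric projector of Abel--Hogancamp. Recall that the $d$-fold parallel cable (with the appropriate framing) of the $(m,n)$ torus knot is precisely the $(M,N) = (dm,dn)$ torus link, whose Poincar\'e series is $R_{0^M, 0^N}(q,t,a)$ by the previous theorem. The $\mathrm{Sym}^d$-colored homology of $T(m,n)$ is obtained by inserting a categorified symmetrizer $P_d$ on the $d$ cabled strands before closing up; the task is to translate this insertion into the combinatorial $R$-series language, together with a normalization coming from the projector itself.

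First I would interpret the boundary data $(\bx,\by)$ of $R_{\bx,\by}$ topologically as in Hogancamp--Mellit: $\bx$ and $\by$ record the positions along the two sides of the fundamental domain of the torus braid diagram at which strands are decorated by a categorified cap/projector (corresponding to $\times$; I read the $1^d$ in the statement as $\times^d$) versus left bare (corresponding to $0$). Inserting $P_d$ on the $d$ new strands introduced by cabling then should correspond to replacing the last $d$ bare symbols on each side by $\times$'s, producing $R_{0^{M-d}\times^d,\, 0^{N-d}\times^d}(q,t,a)$. The multiplicative prefactor $\prod_{i=1}^d (1-qt^{i-d})^{-1}$ should arise as the Poincar\'e series of $P_d$ applied to the identity $d$-braid (equivalently, the graded dimension of the stable limit of the categorified Jones--Wenzl idempotent), which accounts for the intrinsic contribution of the projector that is present in the colored invariant but absent from the open tangle model.

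The main obstacle is the dictionary step: one must verify that the five recursion rules of Definition \ref{def:R} are compatible with $P_d$-insertion at the boundary, so that iteratively unfolding $R_{0^{M-d}\times^d, 0^{N-d}\times^d}$ via these rules reproduces the categorical trace computation. This requires reinterpreting each rule as a Soergel bimodule move (the rule $R_{\times\bx,\times\by}=(t^{|\bx|}+a)R_{\bx,\by}$, in particular, encoding the absorption of a crossing by a projector, which is where the $t^{|\bx|}+a$ factor comes from the $\mathrm{Sym}$-endomorphism algebra). As a nontrivial check at $a=0$, Corollary \ref{cor: R with ones} identifies $R_{0^{M-d}\times^d, 0^{N-d}\times^d}(q,t,0)$ with the explicit expression $q^{-N-M+d}t^{d(d-1)/2} P_{0^{M+N-d}1^d}(q,t^{-1})$, which combined with Lemma \ref{lem: zero and catalan}-style arguments can be compared against the expected $a=0$ specialization of the colored-knot formula; agreement here would provide strong combinatorial evidence independent of the categorical construction of $P_d$.
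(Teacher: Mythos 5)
This theorem is not proved in the paper: it is stated as a result of Hogancamp--Mellit and cited to \cite{HogMellit} (listed in the bibliography as ``in preparation''). The paper uses it as a black-box input, together with the companion statement for $(M,N)$ torus links, to extract the combinatorial corollaries (\ref{cor:R vs catalan} and \ref{cor: R with ones}). So there is no in-paper proof to compare your attempt against; the authors' intent is precisely to relate the cited topological statement to the series $P_{\bu}$ and $Q_{\bv,\bw}$, not to reprove it.

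As a sketch of the external argument your outline has the right ingredients: the $d$-cable of the $(m,n)$ torus knot being the $(M,N)$ torus link, insertion of the categorified symmetrizer $P_d$ on the $d$ cabled strands, and identifying the prefactor $\prod_{i=1}^d (1-qt^{i-d})^{-1}$ with the graded dimension of $P_d$. Your reading of the subscript $1^d$ as $\times^d$, matching the $\{0,\times\}$ alphabet of Definition \ref{def:R} and Corollary \ref{cor: R with ones}, is correct. But as written this is a plan, not a proof: the ``dictionary step'' you flag --- showing that the five recursion rules of Definition \ref{def:R} are exactly what the Soergel-bimodule computation of the cabled closure with $P_d$ inserted satisfies, with the projector contribution cleanly factored off --- is the entire content of the Hogancamp--Mellit theorem, and you leave it entirely unverified. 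Moreover the ``check at $a=0$'' you propose is circular: Corollaries \ref{cor:R vs catalan} and \ref{cor: R with ones} are purely combinatorial identities relating $R_{\bx,\by}$ and $P_{\bu}$; they carry no information about link homology unless this very theorem (and its torus-link companion) is already assumed. They confirm the combinatorics of the recursion, not the topological assertion.
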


Corollaries \ref{cor:R vs catalan} and \ref{cor: R with ones} immediately give a combinatorial interpretation of these results. 
Indeed, Corollary \ref{cor:R vs catalan} relates  the HOMFLY-PT homology  (at $a=0$)  of the $(M,N)$ torus link to the rational Catalan polynomial $c_{M,N}(q,t)$ while Corollary \ref{cor: R with ones} relates the colored homology of the $(m,n)$ torus knot to the polynomial $P_{0^{M+N-d}1^d}(q,t)$. 

\begin{example}
The  $S^2$-colored homology of the trefoil corresponds to $d=2,m=2,n=3$, so $M=4,N=6$.
The corresponding Poincar\'e series corresponds to the polynomial $P_{0^{8}1^2}(q,t)$
which can be easily computed from Figure \ref{fig: 46}.
\end{example}

\begin{remark}
It is easy to see that the $(M,N)$-invariant subsets in $I_{0^{M+N-d}1^d}$ are in bijection with $d$-tuples 
of $(m,n)$-invariant subsets. Therefore the polynomials $P_{0^{M+N-d}1^d}(q,t)$ and $R_{0^{M-d}1^d,0^{N-d}1^d}(q,t,0)$ have $c_{m,n}(1,1)^d$ terms in agreement with the ``exponential growth conjecture'' of \cite{GS,GGS}. 
\end{remark}

\subsection{Denominators of the rational functions}
\label{sec-denoms}

The Hogancamp-Mellit recursion for the series
$R_{\bx,\by}(q,t,0)$ as well as for the full series
$R_{\bx,\by}(q,t,a)$
has an important advantage over the recursion for $P_{\bu}(q,t):$ one can use it to show that the denominator of $R_{\bx,\by}(q,t,a)$ can be simplified to a power of $(1-q)$ as compared to 
the denominator $\prod_{i=1}^d (1-q^{\ell_i}),$ $0<\ell_i<d$
predicted by Theorem \ref{Theorem: recursion unique} before 
reducing expressions.

Indeed, let us apply the same argument as in the proof of Theorems \ref{Theorem: recursion P_v,w} and \ref{Theorem: recursion Q_v,w} to the recursion for $R_{\bx,\by}(q,t,a).$ Similar to Theorem \ref{Theorem: recursion Q_v,w}, let us introduce a partial order on pairs $(\bx,\by)$ of words in the alphabet $\{0,\times\}$
which by abuse of notation we also call $\prec.$

\begin{definition}
Let $(\bx,\by)$ and $(\bx',\by')$ be two pairs of words in the alphabet $\{0,\times\}$. We say that $(\bx',\by')\prec(\bx,\by)$ if and only if either the sum of lengths of $\bx'$ and $\by'$ is less than the sum of lengths of $\bx$ and $\by,$ or those sums are the same, but then $|\bx'|>|\bx|,$ i.e. the number of $\times$ in $\bx'$ is greater then the number of $\times$'s in $\bx.$
\end{definition}

Note that the only case of the recursion that doesn't go down in the above order is the second term of the first recursive relation, where the pair
$(0\bx,0\by)$ gets replaced by $(\bx 0,\by 0).$
Therefore, the only way one gets back the same pair of words one started from is if the initial pair was $(0\ldots 0,0\ldots 0),$ in which case one gets denominator $(1-q):$

\begin{equation*}
R_{00\ldots 0,00\ldots 0}=R_{0\ldots 0\times,0\ldots 0\times}+qR_{0\ldots 00,0\ldots 00},
\end{equation*}

or

\begin{equation*}
R_{00\ldots 0,00\ldots 0}=\frac{R_{0\ldots 0\times,0\ldots 0\times}}{1-q}.
\end{equation*}




In all other cases the recursion relations reduce to terms with pair of words smaller in our order without introducing any denominators.
This in particular holds for the fourth equation Definition
\ref{def:R} which is the only relation involving the parameter $a$.

\begin{corollary}
The power series $R_{\bx,\by}(q,t,a)$ can be expressed as a rational function with the denominator equal to a power of $(1-q).$ 
\end{corollary}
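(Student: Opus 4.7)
My plan is to prove the corollary by strong induction on the partial order $\prec$ on pairs of words in $\{0,\times\}$, using the five recurrence relations of Definition \ref{def:R}. The base case is the minimal pair $(\emptyset,\emptyset)$, where $R_{\emptyset,\emptyset}=1$ trivially has denominator $(1-q)^0$. For the inductive step I would apply the relation dictated by the first letters of $\bx$ and $\by$ and reduce to the inductive hypothesis.

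A quick inspection of Definition \ref{def:R} shows that cases 2, 3, and 4 each strictly reduce the total length $|\bx|+|\by|$, so their reductions are automatically $\prec$-smaller; in particular case 4, the only relation that involves the parameter $a$, contributes only the polynomial factor $(t^{|\bx|}+a)$ and introduces no denominator. Case 1 is the delicate one: its first summand $t^{-|\bx|}R_{\bx\times,\by\times}$ is $\prec$-smaller (same total length, strictly more crosses), whereas its second summand $qt^{-|\bx|}R_{\bx 0,\by 0}$ is a cyclic rotation preserving both total length and number of crosses, hence lives at the same $\prec$-level.

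The main obstacle is controlling this same-level rotation, and I would handle it with a secondary finite induction within each $\prec$-level. For a non-all-zero pair $(\bx,\by)$, iterating the case 1 second term cyclically rotates both words while case 1's applicability forces both leading letters to remain $0$ at every step. Since at least one letter of $\bx$ or $\by$ is a $\times$, within a bounded number of rotations that $\times$ reaches the front and one of cases 2, 3, or 4 then forces a strict $\prec$-decrease. Thus every non-all-zero pair is expressible as a polynomial combination (with coefficients in $q$, $t^{\pm 1}$, $a$) of strictly $\prec$-smaller $R$-values, introducing no new denominators.

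The only genuine self-cycle occurs at $(\bx,\by)=(0^M,0^N)$, where applying case 1 yields
\begin{equation*}
R_{0^M,0^N}(q,t,a) = R_{0^{M-1}\times,\,0^{N-1}\times}(q,t,a) + q\,R_{0^M,0^N}(q,t,a),
\end{equation*}
since all $t^{-|\bx|}$ factors reduce to $t^0=1$. Solving gives $(1-q)\,R_{0^M,0^N} = R_{0^{M-1}\times,\,0^{N-1}\times}$, whose right-hand side is strictly $\prec$-smaller and, by the inductive hypothesis, has denominator a power of $(1-q)$; one additional factor of $(1-q)$ is picked up here. Since each all-zero pair encountered in the induction contributes exactly one more factor of $(1-q)$ and every other step is denominator-free, the final denominator of $R_{\bx,\by}(q,t,a)$ is always a power of $(1-q)$.
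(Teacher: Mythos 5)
Your proof is correct and follows essentially the same approach as the paper: the same partial order $\prec$ on pairs of $\{0,\times\}$-words, the observation that only the second term of the first relation fails to go $\prec$-down, and the identification of $(0^M,0^N)$ as the only self-cycle, producing exactly one factor of $(1-q)$. The paper presents this more tersely, while you make the within-level rotation argument explicit, but the underlying reasoning is identical.
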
 

Combining this with Theorem \ref{Theorem: recursion Q_v,w}, Equation (\ref{Equation: Q vs P}), and Theorem \ref{Theorem: recursion P_v,w}, we get

\begin{corollary}
The power series $P_{0^{M+N}}(q,t)$ can be expressed as a rational function with denominator equal to  $(1-q)^d,$ where $d=\gcd(N,M).$ 
In particular,
the power series $c_{M,N}(q,t)$ can be expressed as a rational function
with denominator equal to  $(1-q)^{d-1},$ where $d=\gcd(N,M).$
\end{corollary}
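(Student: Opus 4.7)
The plan is to chain together the identifications already in place: the hypothesis of the corollary is a statement about $P_{0^{M+N}}(q,t)$, whereas the previous corollary is about $R_{0^M,0^N}(q,t,a)$, so the first step is to reduce one to the other. Specifically, Theorem \ref{thm:ignore bullets} gives $R_{0^M,0^N}(q,t,0)=Q_{0^M,0^N}(q,t)$ and Equation \eqref{Equation: Q vs P} gives $Q_{0^M,0^N}(q,t)=q^{-N-M}P_{0^{M+N}}(q,t^{-1})$, so
\begin{equation*}
P_{0^{M+N}}(q,t)=q^{N+M}\,R_{0^M,0^N}(q,t^{-1},0).
\end{equation*}
Since the denominators involved are polynomials in $q$ alone, the substitution $t\mapsto t^{-1}$ does not affect the denominator. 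Thus the preceding corollary already tells us that $P_{0^{M+N}}(q,t)$ is a rational function in $q,t$ whose denominator is \emph{some} power of $(1-q)$.

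The remaining task is to pin down the exponent as $d=\gcd(M,N)$. For this I would combine the bound just obtained with Theorem \ref{Theorem: recursion unique}, which provides a second expression of $P_{0^{M+N}}$ as a rational function with denominator $\prod_{i=1}^{d}(1-q^{\ell_i})$, $0<\ell_i<d$. The $(1-q)$-adic valuation of this denominator is exactly $d$, since each factor $1-q^{\ell_i}$ contributes one factor of $1-q$. The two expressions describe the same rational function, so its denominator in lowest terms must divide both $\prod_{i=1}^{d}(1-q^{\ell_i})$ and some power of $(1-q)$; the only polynomials dividing both are powers $(1-q)^e$ with $e\le d$. Hence $P_{0^{M+N}}(q,t)$ can be written with denominator $(1-q)^d$.

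The statement for $c_{M,N}(q,t)$ then follows immediately from Lemma \ref{lem: zero and catalan}, which asserts
\begin{equation*}
c_{M,N}(q,t)=q^{-N-M}t^{\delta(N,M)}(1-q)\,P_{0^{M+N}}(q,t^{-1}).
\end{equation*}
Writing $P_{0^{M+N}}(q,t^{-1})$ with denominator $(1-q)^d$ and cancelling one factor of $(1-q)$ with the explicit $(1-q)$ on the right gives an expression for $c_{M,N}$ with denominator $(1-q)^{d-1}$.

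I do not expect a serious obstacle: the only substantive ingredients are the two previous statements about denominators, and the combinatorial bookkeeping needed here is just the observation that any polynomial that is simultaneously a power of $(1-q)$ and a divisor of $\prod_{i=1}^d(1-q^{\ell_i})$ must have exponent at most $d$. The mildly delicate point—worth a sentence in the writeup—is the reminder that denominators here are polynomials in $q$ only, so the substitution $t\mapsto t^{-1}$ in the passage between $P$ and $R$ does not interfere with the denominator estimate.
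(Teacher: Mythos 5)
Your proof is correct and follows essentially the paper's route: it combines the preceding corollary (denominator of $R_{\bx,\by}$ is a power of $1-q$) with the denominator bound from Theorem \ref{Theorem: recursion unique}, transported through the identifications $R\leftrightarrow Q\leftrightarrow P$, and then applies Lemma \ref{lem: zero and catalan} for $c_{M,N}$. The paper compresses the exponent argument to a single ``combining'' sentence, whereas you spell out the $\gcd$ reasoning explicitly; this is a welcome elaboration rather than a different method.
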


In terms of the decision trees, as we replace the labels $(\bv,\bw)$ by the labels $(\bx,\by)=(\phi(\bv),\phi(\bw))$ by forgetting $\bullet$s
(as in Definition \ref{def:phi}), more vertices become identical. As a result, long cycles in the decision tree for $Q_{\bv,\bw}$ locally trivially cover the length one cycles in the decision tree for $R_{\bx,\by},$ with all the branches matching up perfectly. Note that one has to use the decision tree for $Q_{\bv,\bw}$ rather then the decision tree for $P_{\bu},$ i.e., the weights of edges have to correspond to $\area'$ and $\codinv',$ rather than $\area$ and $\codinv.$ Also, the edges $(\bullet\bv,\bullet\bw)\to(\bv\bullet,\bw\bullet)$ become trivial and should be collapsed. We illustrate this in the following example:

\begin{example}
\label{ex:33HM}

In Figure \ref{fig: 33hogmelcolor} we show the decision tree for the Hogancamp-Mellit recursion for $(M,N)=(3,3).$ 
Since we start from $(\bv,\bw)=(000,000)$, it is easy to see that $\bv=\bw$ everywhere in the tree and so we may just record $\bv.$
In fact, for $M=N$ this is always the case, and the recursion in \cite{HogMellit} is identical to the one in \cite{EH}.
In the Figure \ref{fig: 33hogmelcolor} we label the vertices by $\phi(\bv)$.
We remind the reader of Remark \ref{rem:reverse}.

One can compare the decision trees in Figures \ref{fig: 33},\ref{fig: 33 sub},\ref{fig: 33 total} to the one in
Figure \ref{fig: 33hogmelcolor}.
The former surject to the latter via $\bu \sim \bv \mapsto \phi(\bv)$, and the marked nodes $A,B,C$ are mapped to the corresponding nodes. Indeed,
$$
A=011\underline{011}\sim 0\bullet\bullet \xmapsto{\phi} 0,\ B=001\underline{011}\sim 0\times\bullet\xmapsto{\phi}  0\times,
$$
$$
C=001\underline{001}\sim 00\bullet\xmapsto{\phi}  00.
$$
Here we underline the last three letters in $\bu$ which yield both $\bv$ and $\bw$.

One may verify from Figure \ref{fig: 33hogmelcolor} that
\begin{align*}
&R_{000,000}(q,t,0)& = &\frac 1{1-q}\left( 
1 + qt^{-1}+ (qt^{-2}+q^2t^{-2}) R_{0,0} + q^2 t^{-2} R_{00,00}
\right)&
\\
&& = & \frac{1+q t^{-1}}{1-q} + \frac{qt^{-2} + 2q^2 t^{-2}}{(1-q)^2}
	 +\frac{q^3t^{-3}}{(1-q)^3}.&
\end{align*}
In this decision tree, edges from the first relation in
Definition \ref{def:R} are colored \textcolor{blue}{blue},
while those involving the fourth relation are black and
all black edges are weighted with
$t^{|\bx|}+a$ 
(or with non-negative powers of $t$
in the $a=0$ specialization).
Compare this to $R_{000,000}(q,t,a)$ in Example \ref{ex:33Ra}.

One can also observe that the cycle containing C in Figure \ref{fig: 33 sub} covers the corresponding cycle in Figure \ref{fig: 33hogmelcolor} twice, and two branches exiting this cycle get collapsed into one. Algebraically, this double cover corresponds to the cancellation $(1+q)/(1-q^2)=1/(1-q)$ in equation \eqref{eq: C} of Example \ref{ex: 33}.

\begin{figure}[ht!]

{\scalefont{0.9}
\begin{forest}
for tree={s sep= 11mm, l=0.5mm}
[{\scalefont{1.1}${\color{black}000}$}, name=root
[00$\times$,edge=blue, edge label={node[midway,left, blue,font=\scriptsize]{$1$}}
 [0$\times\times$,edge=blue, edge label={node[font=\scriptsize,midway,left, blue]{$t^{-1}$}}
 [$\times\times\times$,edge=blue, edge label={node[font=\scriptsize,midway,left, blue]{$t^{-2}$}}
	[$\times\times$,edge label={node[font=\scriptsize,midway,left]{$t^{2}+a$}}  
 	[$\times$,edge label={node[font=\scriptsize,midway,left]{$t+a$}} 
  	[$\emptyset$,edge label={node[font=\scriptsize,midway,left]{$1+a$}} ]]]]
 [$\times\times0$,edge=blue, edge label={node[font=\scriptsize,right,midway, blue]{$ qt^{-2}$}}
	[$\times0$,edge label={node[font=\scriptsize,midway,left]{$t+a$}} 
	[{\color{mygreen}{\fbox{0}=A}}, name=A,edge label={node[font=\scriptsize,midway,left]{$1+a$}} 
	[$\times$,edge=blue, edge label={node[midway,left, blue,font=\scriptsize]{$1$}} 
	[$\emptyset$,edge label={node[font=\scriptsize,midway,left]{$1+a$}} ]]] ]]
]
  [$0\times0$,edge=blue, edge label={node[font=\scriptsize,midway,right, blue]{$\; qt^{-1}$}}
 [$\times0\times$,edge=blue, edge label={node[font=\scriptsize,midway,left, blue]{$t^{-1}$}}
 [{\fbox{$0\times$}=B}, name=B, edge label={node[font=\scriptsize,midway,left]{$t+a$}}
	[$\times\times$,edge=blue, edge label={node[font=\scriptsize,midway,left, blue]{$t^{-1}$}}
 [$\times$,edge label={node[font=\scriptsize,midway,left]{$t+a$}} [$\emptyset$,edge label={node[font=\scriptsize,midway,left]{$1+a$}}]]]
[$\times0$,edge=blue, edge label={node[font=\scriptsize,midway,right, blue]{$qt^{-1}$}}
  [{\color{mygreen}{\fbox{0}=A}},name=A2,edge label={node[font=\scriptsize,midway,left]{$1+a$}}
[$\times$,edge=blue, edge label={node[midway,left, blue,font=\scriptsize]{$1$}}
	[$\emptyset$,edge label={node[font=\scriptsize,midway,left]{$1+a$}} ] ] ] ]
]]
[$\times00$,edge=blue, edge label={node[font=\scriptsize,midway,right, blue]{$\; qt^{-1}$}}
 [{\color{mygreen}{\fbox{00}=C}}, name=C,edge label={node[font=\scriptsize,midway,left]{$1+a$}}
[$0\times$,edge=blue, edge label={node[midway,left, blue,font=\scriptsize]{$1$}}
	 [$\times\times$,edge=blue, edge label={node[font=\scriptsize,midway,left, blue]{$t^{-1}$}}
	[$\times$,edge label={node[font=\scriptsize,midway,left]{$t+a$}}
	 [$\emptyset$,edge label={node[font=\scriptsize,midway,left]{$1+a$}}]]]
[$\times0$,edge=blue, edge label={node[font=\scriptsize,midway,right, blue]{$q t^{-1}$}} 
  	[{\color{mygreen}{\fbox{0}=A}},name=A3,edge label={node[font=\scriptsize,midway,left]{$1+a$}}
[$\times$,edge=blue, edge label={node[midway,left, blue,font=\scriptsize]{$1$}}
	 [$\emptyset$,edge label={node[font=\scriptsize,midway,left]{$1+a$}}]]]
]]] ] 
	 ]] ] 
\draw[->,blue] (root) .. controls +(south east:1cm) and +(east:1cm) .. node[near start,right]{$\, q$} (root);
\draw[->,blue] (A) .. controls +(south east:1cm) and +(east:1cm) ..  node[near start,right ]{$q$} (A);
\draw[->,blue] (A2) .. controls +(south east:1cm) and +(east:1cm) ..  node[near start,right ]{$q$} (A2);
\draw[->,blue] (A3) .. controls +(south east:1cm) and +(east:1cm) ..  node[near start,right]{$q$} (A3);
\draw[->,blue] (C) .. controls +(south east:1cm) and +(east:1cm) ..  node[near start,right ]{$q$} (C);
\end{forest}
}
\caption{
Decision tree for $(M,N)=(3,3)$ in Hogancamp-Mellit recursion.
Everywhere $\bx=\by$.}
\label{fig: 33hogmelcolor}
\end{figure}
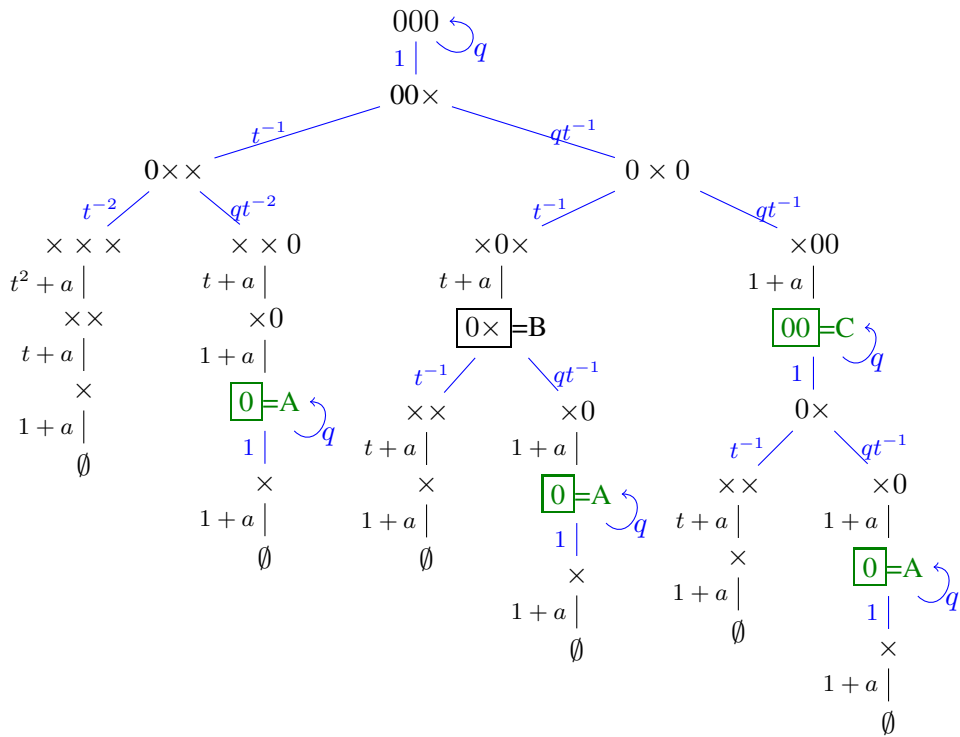


\end{example}

\section{Higher $a$-degrees}
\label{sec-new-higher-a}

The Mellit-Hogancamp construction involves an extra variable $a.$ One can enhance the generating series and the recurrence relations described in 
Section \ref{sec:recursion} to recover the full three variable functions in the following way.

\begin{definition}
Let $\Delta\in \IMN$ be an invariant subset. A number $k\in\overline{\Delta}$ is called a \textit{double cogenerator} of $\Delta$ if $k+N\in\Delta$ and $k+M\in\Delta.$ Let $\Cogen(\Delta)\subset\overline{\Delta}$ denote the set of all double cogenerators of $\Delta.$
\end{definition}

\begin{remark}
Note that we only consider \textbf{non-negative} double cogenerators. If $\Delta\in I_{0\ldots 0},$ then all cogenerators are positive, so it does not matter for such $\Delta$'s. However, this choice will matter for the recurrence relations.
\end{remark}

We will also need the following statistic:

\begin{definition}
Let $\Delta\in I_{N,M}$ be an invariant subset, and $k\in\BZ$ be an integer. We set
\begin{equation*}
\lambda_k(\Delta):=\sharp(\Ngen(\Delta)\cap [k+N+1,k+N+M])=\sharp(\Mgen(\Delta)\cap [k+M+1,k+N+M]).
\end{equation*}
\end{definition}

\begin{remark}
\label{rem: lambda zero}
Note that if $\Delta\in I_{\bu},$ where $\bu=(u_0,\ldots,u_{N+M-1}),$ then 
\begin{equation}\label{eq:lambda_0 vs lambda}
\lambda_0(\Delta)=\lambda(\rho(\Delta)) = \lambda(\bv),
\end{equation}
where $\bv=(u_1,\ldots,u_{N+M-1},1)$
as in Definition \ref{def:lambda}.
However, for any $\Delta$ we have $\lambda_{-1}(\Delta) = \lambda(\Delta)$.
\end{remark}

Now we are ready to define the enhancement of the counting function from Section \ref{sec:recursion}.

\begin{definition}
Let the power series $\hat{P}_{\bu}(q,t,a)$ be given by
\begin{equation*}
\hat{P}_{\bu}:=\sum\limits_{\Delta\in I_{\bu}} q^{\area(\Delta)}t^{\codinv(\Delta)}\prod\limits_{k\in\Cogen(\Delta)}\left(1+at^{\lambda_k(\Delta)}\right).
\end{equation*}
\end{definition}

\begin{remark}
We can think about the enumerating functions $\hat{P}_{\bu}$
in another way to
by expanding the parenthesis in the above product.
In this way one gets a summation over \textit{labeled} invariant subsets, i.e. invariant subsets with some (from none to all) of the double cogenerators labeled. This should be view as a generalization of the Schr\"oder  paths enumeration in the relatively prime case (see \cite{Hd08}).
\end{remark}


\begin{theorem}\label{Theorem: enhanced recursion binary N+M}
Let $\bu=(u_0,\ldots,u_{N+M-1})$ be an admissible sequence. Let also 
\begin{equation*}
\begin{aligned}
&\bv=(u_1,\ldots,u_{N+M-1},1),\\
&\bv'=(u_1,\ldots,u_{N+M-1},0).
\end{aligned}
\end{equation*}
The power series $\hat{P}_{\bu}$ satisfy the following recurrence relation:
\begin{equation*}
\hat{P}_{\bu}=
\begin{cases} 
q(P_{\bv}+P_{\bv'}),& \text{if} \ \ u_0=u_N=u_M=0,\\
qP_{\bv},& \text{if} \ \ u_0=0 \ \  \text{and} \ u_N+u_M=1,\\
q\left(1+at^{\lambda(\bv)}\right)P_{\bv},& \text{if} \ \ u_0=0 \ \  \text{and} \ u_N=u_M=1,\\
t^{\lambda(\bu)}P_{\bv},& \text{if} \ \ u_0=u_N=u_M=1.
\end{cases}
\end{equation*}
\end{theorem}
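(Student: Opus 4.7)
The plan is to extend the proof of Theorem \ref{Theorem: recursion binary N+M} by tracking how the additional factor $\prod_{k\in\Cogen(\Delta)}(1+at^{\lambda_k(\Delta)})$ transforms under the shift map $\rho$ of Definition \ref{def:rho}. The starting observation is that $0$ is a double cogenerator of $\Delta\in I_{\bu}$ if and only if $u_0=0$ and $u_N=u_M=1$ — that is, exactly in case 3 of the enhanced recursion. This already explains why the extra factor $(1+at^{\lambda(\bv)})$ appears only there, and suggests treating case 3 separately from the other three cases.

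First I would establish the following transformation rules under $\rho$. (i) If $0\notin\Cogen(\Delta)$, then $\Cogen(\rho(\Delta))=\Cogen(\Delta)-1$, while if $0\in\Cogen(\Delta)$ (which forces $0\notin\Delta$) then $\Cogen(\rho(\Delta))=(\Cogen(\Delta)\setminus\{0\})-1$. Both follow by unpacking $k\in\Cogen(\rho(\Delta))\iff k\ge 0,\ k+1\in\Cogen(\Delta)$, which holds regardless of whether $0\in\Delta$ because $\rho$ always agrees with the literal shift by $-1$ on $[1,\infty)$. (ii) For every $k\ge 0$, $\lambda_k(\rho(\Delta))=\lambda_{k+1}(\Delta)$. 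When $0\notin\Delta$ this is immediate from $\Ngen(\rho(\Delta))=\Ngen(\Delta)-1$. When $0\in\Delta$ the set $\Ngen(\rho(\Delta))$ differs from $\Ngen(\Delta)-1$ only by removing the image $-1$ of the generator $0\in\Ngen(\Delta)$ and adjoining a new generator at $N-1$; but both $-1$ and $N-1$ lie outside the window $[k+N+1,k+N+M]$ for every $k\ge 0$, so the count is unaffected.

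With these facts in hand I would rerun the case-by-case bijection $\Delta\leftrightarrow\rho(\Delta)$ that underlies Theorem \ref{Theorem: recursion binary N+M}. In cases 1, 2, and 4 one has $0\notin\Cogen(\Delta)$, so (i) and (ii) show that the product over $\Cogen(\Delta)$ indexed by $\lambda_k(\Delta)$ matches the product over $\Cogen(\rho(\Delta))$ indexed by $\lambda_k(\rho(\Delta))$ term by term; the recursion then reduces to the already established one with $\hat P$ replacing $P$. In case 3, $\Cogen(\Delta)$ contains $0$ as a distinguished element contributing the factor $(1+at^{\lambda_0(\Delta)})$, which by \eqref{eq:lambda_0 vs lambda} equals $(1+at^{\lambda(\bv)})$; pulling this factor outside the sum over $\Delta\in I_{\bu}$ leaves the product over $\Cogen(\Delta)\setminus\{0\}$, which by (i) and (ii) matches the product for $\rho(\Delta)\in I_{\bv}$. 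Combined with the $\area$ and $\codinv$ bookkeeping carried over verbatim from Theorem \ref{Theorem: recursion binary N+M}, this yields the stated recurrence.

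I expect the main obstacle to be the careful verification of rule (ii) in the case $0\in\Delta$, where the naive picture ``$N$-generators shift down by one under $\rho$'' breaks because the generator at $0$ cannot shift to $-1$. One must precisely describe the $N$-generator set of $\rho(\Delta)=(\Delta\setminus\{0\})-1$ in terms of that of $\Delta$, observe that the only change takes place among residues in $[0,N-1]$, and then confirm that this change is invisible to all windows $[k+N+1,k+N+M]$ with $k\ge 0$. Once this is handled, everything else is routine bookkeeping using the properties (a)--(d) of $\rho$ already listed in the proof of Theorem \ref{Theorem: recursion binary N+M}.
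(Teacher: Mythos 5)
Your proposal is correct and follows essentially the same approach as the paper's proof: both track how the shift map $\rho$ transforms $\Cogen(\Delta)$ and the exponents $\lambda_k(\Delta)$, and both isolate the extra factor $(1+at^{\lambda_0(\Delta)})=(1+at^{\lambda(\bv)})$ arising exactly when $0$ is a double cogenerator. Your version spells out the transformation rules $\Cogen(\rho(\Delta))=(\Cogen(\Delta)\setminus\{0\})-1$ and $\lambda_k(\rho(\Delta))=\lambda_{k+1}(\Delta)$ for $k\ge 0$ more explicitly than the paper's terse argument, but the underlying reasoning is the same.
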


\begin{proof}
The proof is very similar to that of Theorem \ref{Theorem: recursion binary N+M}, but we need to account for double cogenerators.
As in the proof of  Theorem \ref{Theorem: recursion binary N+M}, consider the shift map $\rho$ of Definition \ref{def:rho}.
The sets $\Cogen(\Delta)$ and $\Cogen(\rho(\Delta))$ are in bijection unless $0$ is a double cogenerator of $\Delta$. This happens if and only if $u_0=0$ and $u_M=u_N=1$; and in this case
the extra term for $k=0$ is 
 $(1+at^{\lambda_{0}(\Delta)}) = (1+at^{\lambda(\bv)})$ according to equation \eqref{eq:lambda_0 vs lambda}.
\end{proof}

Similar to Section \ref{sec:comparison}, in order to match with the recursion of Hogancamp and Mellit, we switch to the adjusted statistics $\area'$, $\codinv'$, and $\dinv'.$

\begin{definition}
\label{def:hatQ}
As before, let ${\bu}=(u_0,\ldots,u_{N+M-1})\in\{0,1\}^{N+M}$ be an admissible binary sequence. The generating series $\hat Q_{\bu}(q,t,a)$ is defined by
\begin{equation*}
\hat{Q}_{\bu}(q,t,a):=\sum\limits_{\Delta\in I_{\bu}} t^{-\codinv'(\Delta)}q^{\area'(\Delta)}\prod\limits_{k\in\Cogen(\Delta)}\left(1+at^{-\lambda_k(\Delta)}\right).
\end{equation*}


We also set
\begin{equation*}
\hat{Q}_{{\bv},{\bw}}(q,t,a):=\hat{Q}_{\bu}(q,t,a),
\end{equation*}
where the sequences ${\bv}=(v_0,\ldots,v_{M-1})\in\{0,\bullet,\times\}^M$ and ${\bw}=(w_0,\ldots,w_{N-1})\in\{0,\bullet,\times\}^N$ are determined in the same way as in Definition \ref{Definition: v,w from u},
i.e. $(\bv,\bw) = \mathbf{b}(\bu)$.
\end{definition}

Note that similar to arguments in Section \ref{sec:comparison}, one gets
\begin{equation}\label{formula: hatQ vs hatP}
\hat{Q}_{\zeroMN}(q,t,a)=q^{-N-M}\hat{P}_{\zeroMN}(q,t^{-1},a).
\end{equation}

\begin{theorem}\label{Theorem: recursion hatQ_v,w}
The following recursion holds:
\begin{equation*}
\begin{aligned}
&\hat{Q}_{0{\bv},0{\bw}}=t^{-|{\bv}|}\hat{Q}_{{\bv}\times,{\bw}\times}+qt^{-|{\bv}|}\hat{Q}_{{\bv}0,{\bw}0},\\
&\hat{Q}_{\times{\bv},0{\bw}}=\hat{Q}_{{\bv}\times,{\bw}\bullet},\\
&\hat{Q}_{0{\bv},\times{\bw}}=\hat{Q}_{{\bv}\bullet,{\bw}\times},\\
&\hat{Q}_{\times{\bv},\times{\bw}}=(t^{|{\bv}|}+a)\hat{Q}_{{\bv}\bullet,{\bw}\bullet},\\
&\hat{Q}_{\bullet{\bv},\bullet{\bw}}=\hat{Q}_{{\bv}\bullet,{\bw}\bullet},\\
\end{aligned} 
\end{equation*}
\end{theorem}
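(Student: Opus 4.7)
The plan is to combine the $\rho$-shift analysis of Theorem \ref{Theorem: recursion Q_v,w} (which already handles the $t^{-\codinv'}q^{\area'}$ factor) with bookkeeping for the new product $\prod_{k\in\Cogen(\Delta)}\bigl(1+at^{-\lambda_k(\Delta)}\bigr)$, along the same lines as in the proof of Theorem \ref{Theorem: enhanced recursion binary N+M}. So I only need to track how cogenerators and the statistics $\lambda_k$ transform under the shift map $\rho:\IMN\to\IMN$ of Definition \ref{def:rho}.

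First, I would prove two lemmas. The cogenerator bijection: since $k\in\rho(\Delta)\iff k+1\in\Delta$ for all $k\in\BZ_{\ge 0}$, the definitions unfold to give
$$
k\in\Cogen(\rho(\Delta))\iff k+1\in\Cogen(\Delta).
$$
Hence $\Cogen(\rho(\Delta))$ is in natural bijection with $\{k\in\Cogen(\Delta):k\ge 1\}$, and the only cogenerator of $\Delta$ that can ``disappear'' under $\rho$ is $0$ itself. The $\lambda_k$-transformation: for every $k\ge 0$,
$$
\lambda_k(\rho(\Delta))=\lambda_{k+1}(\Delta).
$$
When $0\notin\Delta$ this is immediate since $\Ngen(\rho(\Delta))=\Ngen(\Delta)-1$; when $0\in\Delta$, the $N$-generator $0$ is replaced by $N-1$, but $N-1$ lies outside the window $[k+N+1,k+N+M]$ for $k\ge 0$, so the count is unchanged.

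Next I would identify when $0$ is a double cogenerator: this happens precisely when $u_0=0$ and $u_N=u_M=1$, i.e.\ exactly in the fourth recursion relation (the case $v_0=w_0=\times$). In all the other four cases, either $0\in\Delta$, or $N\notin\Delta$, or $M\notin\Delta$, so $0\notin\Cogen(\Delta)$; by the two lemmas above the cogenerator product for $\Delta$ matches the cogenerator product for $\rho(\Delta)$ term by term, and the identity reduces to the corresponding line in Theorem \ref{Theorem: recursion Q_v,w} (which we already know). So in these four cases no new argument is needed.

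The one case requiring real work is $\hat Q_{\times\bv,\times\bw}=(t^{|\bv|}+a)\hat Q_{\bv\bullet,\bw\bullet}$. Here $0\in\Cogen(\Delta)$, and also $N+M\in\Delta$ automatically (because $N\in\Delta$ and $\Delta+M\subset\Delta$), with $N+M$ neither an $N$- nor an $M$-generator; consequently $\rho(\Delta)\in I_{\bv\bullet,\bw\bullet}$. The cogenerator product for $\Delta$ thus factors as
$$
\bigl(1+at^{-\lambda_0(\Delta)}\bigr)\prod_{k\in\Cogen(\rho(\Delta))}\bigl(1+at^{-\lambda_k(\rho(\Delta))}\bigr).
$$
Combining with the $Q$-recursion factor $t^{|\bv|}$ for this case and using Remark \ref{rem: lambda zero} together with Remark \ref{rem: lambda via v and w} to identify $\lambda_0(\Delta)=\lambda(\rho(\Delta))=|\bv\bullet|=|\bv|$ yields $t^{|\bv|}\bigl(1+at^{-|\bv|}\bigr)=t^{|\bv|}+a$, as required. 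The main obstacle is the bookkeeping in this step, namely verifying that the ``missing'' cogenerator contributes exactly the factor $(1+at^{-|\bv|})$ and not something shifted; once the two lemmas are in hand, the rest is straightforward case checking paralleling the proof of Theorem \ref{Theorem: recursion Q_v,w}.
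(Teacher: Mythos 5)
Your proposal is correct, and it takes the same underlying approach as the paper: tracking the effect of the shift map $\rho$ on the cogenerator set and on the window statistics $\lambda_k$, then reading off each of the five relations from Theorem \ref{Theorem: recursion Q_v,w} with the extra $(1+at^{-\lambda_k})$ factors accounted for. The paper's own proof is terser — it simply declares that the first, second, third and fifth relations proceed ``the same way as in Theorem \ref{Theorem: recursion Q_v,w}'' and treats only the fourth, appealing to the $\hat{P}$-recursion of Theorem \ref{Theorem: enhanced recursion binary N+M} and the $t\mapsto t^{-1}$ substitution — whereas your write-up makes the implicit bookkeeping explicit via the two lemmas ($k\in\Cogen(\rho(\Delta))\iff k+1\in\Cogen(\Delta)$ and $\lambda_k(\rho(\Delta))=\lambda_{k+1}(\Delta)$), which is exactly what is needed to justify the paper's ``the same way'' claim; the identification $\lambda_0(\Delta)=\lambda(\rho(\Delta))=|\bv|$ in your fourth case matches the paper's use of $\lambda(\bz)=|\bv|$.
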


\begin{proof}
The proof proceeds the same way as in Theorem \ref{Theorem: recursion Q_v,w}, with the exception for the fourth relation 
$$
\hat{Q}_{\times{\bv},\times{\bw}}=(t^{|{\bv}|}+a)\hat{Q}_{{\bv}\bullet,{\bw}\bullet}.
$$ 
The corresponding relation for $\hat{P}_{\bu}(q,t,a)$ is 
$$
\hat{P}_{\bu}(q,t,a)=q\left(1+at^{\lambda(\bz)}\right)P_{\bz},
$$
where $u_0=0$ and $u_N=u_M=1$, and $\bz=(u_1, \ldots, u_{N+M-1}, 1)$.
This differs from the corresponding relation for $P_{\bu}(q,t)$ by the factor of $\left(1+at^{\lambda(\bz)}\right),$ which after switching $t$ to $t^{-1}$ becomes $\left(1+at^{-\lambda(\bz)}\right).$ Therefore, one should multiply the corresponding relation 
$$
Q_{\times\bv,\times\bw}=t^{|\bv|}Q_{\bv\bullet,\bw\bullet}
$$ 
by $\left(1+at^{-|\bv|}\right),$ which yields the desired relation.
\end{proof}

\begin{corollary}\label{cor:R vs hatQ}
Let $\bx=\phi(\bv)$ and $\by=\phi(\bw).$ 
Then
\begin{equation*}
R_{\bx,\by}(q,t,a)
=\hat{Q}_{\bv,\bw}(q,t,a).
\end{equation*}
\end{corollary}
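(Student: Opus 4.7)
The plan is to adapt the proof of Theorem \ref{thm:ignore bullets} to the $a$-refined setting, using the same induction on the partial order $\prec$. The base case is $(\bv,\bw) = (\bullet^M,\bullet^N)$, which corresponds to $\bu = 1^{M+N}$ and $\Delta = \BZ_{\ge 0}$. This subset has no gaps and no double cogenerators, so $\hat Q_{\bullet^M,\bullet^N}(q,t,a) = 1 = R_{\emptyset,\emptyset}(q,t,a)$.

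For the inductive step I would match the five relations of Theorem \ref{Theorem: recursion hatQ_v,w} against the four recursions of Definition \ref{def:R}, using $\bx=\phi(\bv)$, $\by=\phi(\bw)$ and noting that $|\bv|=|\bx|$, $|\bw|=|\by|$ because $\phi$ only deletes bullets. Relations two, three and four of Theorem \ref{Theorem: recursion hatQ_v,w} strictly increase the total bullet count, and hence produce $\prec$-smaller pairs to which the inductive hypothesis applies; crucially, the factor $(t^{|\bv|}+a)$ appearing in the fourth $\hat Q$-relation matches the factor $(t^{|\bx|}+a)$ in $R_{\times\bx,\times\by}$ on the nose, since $|\bv|=|\bx|$. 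The fifth relation $\hat Q_{\bullet\bv,\bullet\bw} = \hat Q_{\bv\bullet,\bw\bullet}$ is trivially compatible because $\phi(\bullet\bv) = \phi(\bv\bullet) = \phi(\bv)$, so its $R$-side image is the tautology $R_{\phi(\bv),\phi(\bw)} = R_{\phi(\bv),\phi(\bw)}$.

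The only case requiring genuine work is the second summand $qt^{-|\bv|}\hat Q_{\bv 0,\bw 0}$ of the first relation, since the pair $(\bv 0,\bw 0)$ is incomparable to $(0\bv,0\bw)$ in $\prec$. As in Theorem \ref{thm:ignore bullets}, I would iterate the first and fifth relations; either we escape to strictly $\prec$-smaller terms, or after finitely many steps the pair $(\bv,\bw)$ reappears, producing a linear equation of the form
\[
\hat Q_{\bv,\bw} \;=\; q^k\,\hat Q_{\bv,\bw} + \sum_{(\bv',\bw') \prec (\bv,\bw)} \gamma_{\bv',\bw'}\,\hat Q_{\bv',\bw'},
\]
for some positive integer $k$ and monomials $\gamma_{\bv',\bw'}$. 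The key observation, already present in the $a=0$ argument, is that along such a cycle the intermediate pairs are cyclic rotations of $(\bv,\bw)$ and therefore contain no $\times$ symbol, so the fourth relation---the unique source of the $a$-factor---is never invoked, and both $q^k$ and the $\gamma_{\bv',\bw'}$ are independent of $a$. Since the identical linear equation, with identical coefficients, is produced by the $R$-recursion applied to $(\phi(\bv),\phi(\bw))$ (with any fifth-relation steps collapsed under $\phi$), dividing by $1-q^k$ and invoking the inductive hypothesis on each $\hat Q_{\bv',\bw'}$ closes the argument.

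The main obstacle is the bookkeeping of cycles after applying $\phi$: one must verify that the $\hat Q$-cycle through $(\bv,\bw)$ and the $R$-cycle through $(\phi(\bv),\phi(\bw))$ produce the same equation. This is handled exactly as in Theorem \ref{thm:ignore bullets}, thanks to the inertness of $a$ along cycles. No new phenomena beyond the $a=0$ case arise, because the extra $(1+at^{-\lambda_k(\Delta)})$ factors in $\hat Q$ are packaged entirely inside the applications of the fourth relation, which is used only on $\prec$-decreasing steps.
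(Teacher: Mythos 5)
Your proposal is correct and matches the paper's own (unwritten) proof: the paper simply states that the argument is ``almost identical to Theorem~\ref{thm:ignore bullets} and we leave it to the reader,'' and your write-up fills in precisely the required details --- the same induction on $\prec$, the observation that relations two through four increase the bullet count while the fourth matches $(t^{|\bx|}+a)$ because $\phi$ preserves $|\cdot|$, and the key point that along a cycle no $\times$'s (hence no fourth-relation, hence no $a$-factors) appear, so the cycle analysis is unchanged from the $a=0$ case.
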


The proof is almost identical to  Theorem \ref{thm:ignore bullets} and we leave it to the reader.

\begin{example}
\label{ex:33Ra} Let us use the Hogancamp-Mellit recursion to compute $R_{000,000}$ (see Figure \ref{fig: 33hogmelcolor} for the decision tree):
\begin{align*}
&R_{000,000}(q,t,a) &=&
\frac{1}{1-q}\left( \left[(t^2+a) + q(t+a)\right]t^{-3}(t+a)(1+a)
\quad +
\right.&
\\
&&&
\left.
\quad
 (q+q^2) t^{-3} (t+a)(1+a) R_{0,0}(q,t,a) 
+ q^2 t^{-2}(1+a)R_{00,00}(q,t,a)  \right)
&
\\
&&=&
\frac{1}{1-q}\left[
(t^2+a) + q(t+a)
\right]
t^{-3}(t+a)(1+a)
\quad +&
\\
&&&
\qquad
\frac{1}{(1-q)^2}\left(
q+2q^2
\right)
 t^{-3} (t+a)(1+a)^2
\quad +&
\\
&&&
\qquad
\frac{1}{(1-q)^3}\left(
 q^3 t^{-3}
\right)
(1+a)^3
&
\end{align*}
\end{example}

\begin{example}
\label{ex:33hatP}
One can also modify Example \ref{ex: 33} to include higher powers of $a$ and compute $\hat{P}_{000000}(q,t,a).$ We first compute the value of the loop
$$
A=q^2t^2(1+a)+Aq,\quad A=\frac{q^2t^2(a+1)}{1-q}.
$$
Next we compute the values of $B$ and $C:$  
\begin{align*}
B&=q^3t(1+at)(1+a)+q^2(1+a)A=q^3t(1+at)(1+a)+\frac{q^4t^2(1+a)^2}{1-q},
\\
C&=q^4t^2(1+at)(1+a)+q^3t(1+a)A+q^2tB+q^2C\\
\end{align*}
hence
\begin{align*}
C&=\frac{q^4t^2(1+at)(1+a)+q^3t(1+a)A+q^5t^2(1+at)(1+a)+q^4t(1+a)A}{1-q^2}\\
&=\frac{q^4t^2(1+at)(1+a)}{1-q}+\frac{q^5t^3(1+a)^2}{(1-q)^2}.
\end{align*}
Finally,
\begin{align*}
(1-q)\hat{P}_{000000}(q,t,a)&=q^6(1+at^2)(1+at)(1+a)+q^5(1+at)(1+a)A \quad +\\
&\ \ \ \ \ \ q^4(1+at)B+q^4(1+a)C\\
&=q^6(1+at^2)(1+at)(1+a)+q^7t(1+at)^2(1+a) \quad +\\
&\ \ \ \ \ \ \frac{q^7t^2(1+at)(1+a)^2+2q^8t^2(1+at)(1+a)^2}{1-q} \quad +\\
&\ \ \ \ \ \ \frac{q^9t^3(1+a)^3}{(1-q)^2}.
\end{align*}

Note that according to Corollary \ref{cor:R vs hatQ} and Formula \ref{formula: hatQ vs hatP} one should get 

\begin{equation*}
R_{000,000}(q,t,a)=\hat{Q}_{000,000}(q,t,a)=q^{-6}\hat{P}_{000000}(q,t^{-1},a).
\end{equation*}

Indeed, one gets

\begin{align*}
\frac{(1-q)\hat{P}_{000000}(q,t^{-1},a)}{q^6}&=(1+at^{-2})(1+at^{-1})(1+a)+qt^{-1}(1+at^{-1})^2(1+a) \quad +\\
&\ \ \ \ \ \ \frac{qt^{-2}(1+at^{-1})(1+a)^2+2q^2t^{-2}(1+at^{-1})(1+a)^2}{1-q} \quad +\\
&\ \ \ \ \ \ \frac{q^3t^{-3}(1+a)^3}{(1-q)^2}\\
&=t^{-3}(t+a)(1+a)\left[(t^2+a)+q(t+a)\right] \quad +\\
&\ \ \ \ \ \ \frac{qt^{-3}(t+a)(1+a)^2(1+2q)}{1-q}+\frac{q^3t^{-3}(1+a)^3}{(1-q)^2},
\end{align*}
which matches the computation in Example \ref{ex:33Ra}.
Observe the $q, t$-symmetry of 
\begin{multline*}
t^3 q^{-6}(1-q)^3 \hat{P}_{000000}(q, t^{-1},a)
=   
(1+a)\left(
a^2+
a(t+q+t^2+q^2+q^2t^2+qt-2qt^2-2q^2t) +
\right. \\
\left.
(q^3t^2+q^2t^3 - 2q^3t-2qt^3 + q^3+t^3 +q^2t+qt^2-
 2q^2t^2 +qt) 
\right).
\end{multline*}
\end{example}

\end{document}